\newtheorem{theorem}{Theorem}[section]
\newtheorem{lemma}{Lemma}[section]
\newtheorem{corollary}{Corollary}[section]
\theoremstyle{definition}
\newtheorem{remark}{Remark}[section]
\newtheorem*{myexample}{Prototypical Example}
\numberwithin{equation}{section}
\newcommand\blfootnote[1]{\begingroup\renewcommand\thefootnote{}\footnote{#1}\addtocounter{footnote}{-1}\endgroup}
\begin{document}

\title{
{\bf\Large Three positive solutions to an indefinite Neumann problem: a shooting method\,}\footnote{
Work partially supported by the Grup\-po Na\-zio\-na\-le per l'Anali\-si Ma\-te\-ma\-ti\-ca, la Pro\-ba\-bi\-li\-t\`{a} e le lo\-ro
Appli\-ca\-zio\-ni (GNAMPA) of the Isti\-tu\-to Na\-zio\-na\-le di Al\-ta Ma\-te\-ma\-ti\-ca (INdAM). Progetto di Ricerca 2017:
``Problemi differenziali con peso indefinito: tra metodi topologici e aspetti dinamici''. Guglielmo Feltrin is also partially supported by the project ``Existence and asymptotic behavior of solutions to systems of semilinear elliptic partial differential equations'' (T.1110.14) of the \textit{Fonds de la Recherche Fondamentale Collective}, Belgium.}
}
\author{{\bf\large Guglielmo Feltrin}
\vspace{1mm}\\
{\it\small Department of Mathematics, University of Mons}\\
{\it\small place du Parc 20, B-7000 Mons, Belgium}\\
{\it\small e-mail: guglielmo.feltrin@umons.ac.be}\vspace{1mm}\\
\vspace{1mm}\\
{\bf\large Elisa Sovrano}
\vspace{1mm}\\
{\it\small Department of Mathematics, Computer Science and Physics, University of Udine}\\
{\it\small via delle Scienze 206, 33100 Udine, Italy}\\
{\it\small e-mail: elisa.sovrano@spes.uniud.it}\vspace{1mm}}

\date{}

\maketitle

\vspace{-2mm}

\begin{center}
\normalsize \textit{Dedicated to Professor Fabio Zanolin for his coming 65th birthday}
\end{center}

\vspace{5mm}

\begin{abstract}
\noindent
We deal with the Neumann boundary value problem
\begin{equation*}
\begin{cases}
\, u'' + \bigl{(} \lambda a^{+}(t)-\mu a^{-}(t) \bigr{)}g(u) = 0, \\
\, 0 < u(t) < 1, \quad \forall\, t\in\mathopen{[}0,T\mathclose{]},\\
\, u'(0) = u'(T) = 0,
\end{cases}
\end{equation*}
where the weight term has two positive humps separated by a negative one and $g\colon \mathopen{[}0,1\mathclose{]} \to \mathbb{R}$ is a continuous function such that $g(0)=g(1)=0$, $g(s) > 0$ for $0<s<1$ and $\lim_{s\to0^{+}}g(s)/s=0$. We prove the existence of three solutions when $\lambda$ and $\mu$ are positive and  sufficiently large.
\blfootnote{\textit{AMS Subject Classification:} 34B08, 34B15, 34B18.}
\blfootnote{\textit{Keywords:} Neumann problem, indefinite weight, positive solutions, multiplicity results, shooting method.}
\end{abstract}

\section{Introduction}\label{section-1}

In this paper, we are interested in the multiplicity of positive solutions for an indefinite Neumann boundary value problem of the form
\begin{equation}\label{eq-1.1}
\begin{cases}
\, u'' + a(t)g(u) = 0, \\
\, u'(0) = u'(T) = 0,
\end{cases}
\end{equation}
where the weight term $a\in L^{1}(0,T)$ is sign-changing and the nonlinearity $g\colon \mathopen{[}0,1\mathclose{]} \to \mathbb{R}^{+}:=\mathopen{[}0,+\infty\mathclose{[}$ is a continuous function such that
\begin{equation*}
g(0) = g(1) = 0, \qquad g(s) > 0 \quad \text{for } \, 0 < s < 1,
\leqno{(g_{*})}
\end{equation*}
and
\begin{equation*}
\lim_{s\to 0^{+}} \dfrac{g(s)}{s} = 0.
\leqno{(g_{0})}
\end{equation*}
In our context, a \textit{solution} $u(t)$ of problem \eqref{eq-1.1} is meant in the Carath\'{e}odory's sense and is such that $0\leq u(t)\leq1$ for all $t\in\mathopen{[}0,T\mathclose{]}$. Moreover, we say that a solution is \textit{positive} if $u(t)>0$ for all $t\in\mathopen{[}0,T\mathclose{]}$.

Our study is motivated by the results achieved in \cite{BoGoHa-05,Bo-11,BoFeZa-17tams,FeZa-15ade,FeZa-15jde,FeZa-17,GaHaZa-03,GoLo-00,Lo-00} in which, dealing with different boundary value problems compared to the one treated here, the authors established multiplicity results of positive solutions in relation to the features of the graph of the weight $a(t)$.
This way, we would like to pursue further the investigation of the dynamical effects produced by the weight term associated with nonlinearities satisfying conditions $(g_{*})$ and $(g_{0})$. 
With this purpose, first of all we notice that the search of positive solutions under these assumptions leads to some well know facts. Indeed, it is straightforward to verify that problem \eqref{eq-1.1} has two trivial solutions: $u\equiv 0$ and $u\equiv 1$. Furthermore, by an integration of the differential equation in \eqref{eq-1.1} and by taking into account the Neumann boundary conditions, we obtain a necessary condition for the existence of nontrivial positive solutions to problem~\eqref{eq-1.1}: the weight $a(t)$ has to be sign-changing in the interval $\mathopen{[}0,T\mathclose{]}$. This is the peculiar characteristic which leads to call \textit{indefinite} the problem we are studying.

Indefinite Neumann problems with a nonlinearity $g(s)$ satisfying $(g_{*})$ are a very important issue in the field of population genetics, starting from the pioneering works \cite{BrHe-90,Fl-75,He-81,Se-83}. 
However, as far as we know, in order to achieve both results of uniqueness and multiplicity, lot of attention has been given to the proprieties of the nonlinearity $g(s)$ instead of the shape of the graph of the weight $a(t)$. Contributions in this direction are achieved in \cite{LoNa-02,LoNaNi-13,LoNiSu-10,So-17}. In particular, dealing with a nonlinearity $g(s)$ similar to the one taken into account in the present paper, in \cite{LoNiSu-10} the authors stated the following multiplicity result: if $\int_{0}^{T}a(t)\,dt<0$ and $g(s)$ satisfies $(g_{*})$ and $(g_{0})$ along with $\lim_{s\to0^{+}}g(s)/s^{k}>0$ for some $k>1$, then the Neumann problem associated with $d u''+a(t)g(u)=0$ has at least two positive solutions for $d>0$ sufficiently small. Instead, in the present work, we consider a different dispersal parameter $d$ and we study the effects that an indefinite weight has on the dynamics of problem~\eqref{eq-1.1}. For this reason, we suppose that the function $a(t)$ is characterized by a finite sequence of positive and negative humps. This way, our first goal is to show how the dynamics could be more rich than the ones in \cite{LoNiSu-10} (cf.~Example at p.~\pageref{example}).

Accordingly, throughout this paper, we will assume that
\begin{equation*}
\begin{aligned}
&\text{\,$\exists\,\sigma,\tau$ with $0 < \sigma < \tau < T$ such that}\\
&\begin{aligned}
&a^{+}(t)\succ 0,  & &a^{-}(t)\equiv 0, & &\text{on } \mathopen{[}0,\sigma \mathclose{]},\\
&a^{+}(t)\equiv 0, & &a^{-}(t)\succ 0,  & &\text{on } \mathopen{[}\sigma,\tau \mathclose{]}, \\
&a^{+}(t)\succ 0,  & &a^{-}(t)\equiv 0, & &\text{on } \mathopen{[}\tau,T \mathclose{]},
\end{aligned}
\end{aligned}
\leqno{(a_{*})}
\end{equation*}
where, following a standard notation, $a(t) \succ 0$ means that $a(t)\geq 0$ almost everywhere 
on a given interval with $a\not\equiv 0$ on that interval.
Moreover, given two real positive parameters $\lambda$ and $\mu$, we will consider the function 
\begin{equation}\label{eq-weight}
a(t)=a_{\lambda,\mu}(t):=\lambda a^{+}(t)-\mu a^{-}(t),
\end{equation}
with $a^{+}(t)$ and $a^{-}(t)$ denoting the positive and the negative part of the function $a(t)$, respectively.
In our framework, the dispersal parameter is thus modulated by the coefficients $\lambda$ and $\mu$.
A weight term defined as in \eqref{eq-weight} is already addressed in different contexts (cf.~\cite{BoFeZa-17tams,SoZa-17}) and the starting interest can be traced back at the works \cite{Lo-97,Lo-00}.

With the above notation, problem~\eqref{eq-1.1} reads as follows
\begin{equation*}
\begin{cases}
\, u'' + \bigl{(}\lambda a^{+}(t)-\mu a^{-}(t)\bigr{)} g(u) = 0, \\
\, u'(0) = u'(T) = 0.
\end{cases}
\leqno{(\mathcal{N}_{\lambda,\mu})}
\end{equation*}

We are now ready to state our main result which addresses multiplicity of positive solutions to problem~$(\mathcal{N}_{\lambda,\mu})$.

\begin{theorem}\label{th-1.1}
Let $g \colon \mathopen{[}0,1\mathclose{]} \to \mathbb{R}^{+}$ be a locally Lipschitz continuous function satisfying $(g_{*})$ and $(g_{0})$.
Let $a \colon \mathopen{[}0,T\mathclose{]} \to \mathbb{R}$ be an $L^{1}$-function satisfying $(a_{*})$. Then, there exists $\lambda^{*}>0$ such that for each $\lambda>\lambda^{*}$ there exists $\mu^{*}(\lambda)>0$ such that for every $\mu>\mu^{*}(\lambda)$ problem $(\mathcal{N}_{\lambda,\mu})$ has at least three positive solutions.
\end{theorem}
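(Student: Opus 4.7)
My plan is to employ a shooting method. For each $\alpha\in(0,1)$, let $u_{\alpha}$ denote the solution of the Cauchy problem for the equation in $(\mathcal{N}_{\lambda,\mu})$ with $u(0)=\alpha$, $u'(0)=0$; extending $g$ by $0$ outside $[0,1]$ makes $u_{\alpha}$ globally defined on $[0,T]$. Positive solutions of $(\mathcal{N}_{\lambda,\mu})$ correspond to zeros of the shooting map $\Phi(\alpha):=u_{\alpha}'(T)$ lying in the admissible set $\mathcal{A}=\{\alpha\in(0,1):0<u_{\alpha}(t)<1\text{ for every }t\in[0,T]\}$. From $(a_{*})$ and $(g_{*})$, every $u_{\alpha}\in\mathcal{A}$ is concave on $[0,\sigma]$ and $[\tau,T]$ and convex on $[\sigma,\tau]$; combined with the Neumann conditions $u'(0)=u'(T)=0$, this rigidity forces $u_{\alpha}$ to be non-increasing on $[0,\sigma]$, non-decreasing on $[\tau,T]$, and to attain a unique global minimum at some $t^{*}\in[\sigma,\tau]$.

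Next I would carry out a time-map/energy analysis on each subinterval. Setting $G(u)=\int_{0}^{u}g$, on $[0,\sigma]$ an energy estimate (exact when $a^{+}$ is piecewise constant, obtained by comparison otherwise) yields that the time for $u_{\alpha}$ to descend from $(\alpha,0)$ down to the level $0$ is of order $\lambda^{-1/2}\,C(\alpha)$, where $C(\alpha)=\int_{0}^{\alpha}[G(\alpha)-G(\xi)]^{-1/2}\,d\xi$ diverges both as $\alpha\to 0^{+}$ (thanks to $(g_{0})$) and as $\alpha\to 1^{-}$ (since $g(1)=0$). Hence, for $\lambda$ large, the set of $\alpha$'s for which $u_{\alpha}$ stays strictly positive on $[0,\sigma]$ is the union of a small right-neighbourhood of $0$ and a small left-neighbourhood of $1$; a symmetric backward-shooting argument from $(u_{\alpha}(T),0)$ gives the analogous constraint at $T$. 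On $[\sigma,\tau]$ the convex flow driven by $\mu$ large turns $u'$ through zero in time of order $\mu^{-1/2}$ and then drives $u$ upward; a careful energy balance lets me track how the phase point $(u_{\alpha}(\tau),u_{\alpha}'(\tau))$ depends on $\alpha$ when $\mu$ is chosen sufficiently large relative to $\lambda$.

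With these ingredients I would then count sign changes of $\Phi$ on $\mathcal{A}$. On each of the two components of $\mathcal{A}$ the sign of $\Phi$ can be determined at two places: at the \emph{outer} endpoint ($\alpha\to 0^{+}$ or $\alpha\to 1^{-}$), where $u_{\alpha}$ is essentially constant and a first-order expansion using $(g_{0})$ respectively $g(1)=0$ pins down the sign of $\Phi(\alpha)$; and at the \emph{inner} endpoint (where $u_{\alpha}$ is on the verge of touching $0$), where the large-$\mu$ turning-time estimate on $[\sigma,\tau]$ forces the opposite sign. The resulting pattern produces at least four sign alternations of $\Phi$ across $\mathcal{A}$, and the Intermediate Value Theorem gives three distinct zeros, i.e.\ three positive solutions of $(\mathcal{N}_{\lambda,\mu})$.

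The main obstacle is precisely this last counting of sign changes. It requires a delicate joint handling of the large-$\lambda$, large-$\mu$ asymptotics---in particular the order "$\lambda$ first, then $\mu$ depending on $\lambda$" from the statement---so that $\mu$ is large enough to pivot $u'$ through zero on $[\sigma,\tau]$ (making Neumann at $T$ attainable) yet not so large that $u$ is driven up to $1$ and escapes $\mathcal{A}$ before time $\tau$. The threshold $\mu^{*}(\lambda)$ in the theorem is exactly the one making this balance work, and the verification that the three zeros of $\Phi$ so produced correspond to solutions that genuinely remain in $(0,1)$ throughout $[0,T]$ is the subtle final check.
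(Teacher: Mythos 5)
Your overall strategy (shoot from $t=0$, extend $g$ by zero, exploit the concavity/convexity pattern imposed by $(a_{*})$, and close with the Intermediate Value Theorem) is in the same family as the paper's argument, and several of your intermediate claims (the descent time on $[0,\sigma]$ diverging as $\alpha\to0^{+}$ by $(g_{0})$ and as $\alpha\to1^{-}$, the large-$\mu$ pivot of $u'$ on $[\sigma,\tau]$) correspond to the paper's Lemmas~\ref{lem-2.1}--\ref{lem-2.7}. But the final counting step, which you yourself flag as the main obstacle, has a genuine gap: with the structure you describe --- an admissible set $\mathcal{A}$ made of \emph{two} intervals, and on each of them one sign of $\Phi$ at the outer endpoint and the opposite sign at the inner endpoint --- the Intermediate Value Theorem yields only \emph{two} zeros, one per component, not three. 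The missing idea is that for $\mu$ large the admissible set near $\alpha=0$ is itself disconnected: there is an intermediate shooting height $p_{1}$ at which the convex push on $[\sigma,\tau]$ drives $u_{p_{1}}$ \emph{above} $1$ before time $\tau$, while the solutions bounding its surrounding interval escape through the bottom. It is precisely this escape through $u=1$ that creates two admissible subintervals (hence two sign changes, hence two solutions) in the region near $0$, plus one more near $1$. The paper encodes this by shooting $[0,1]\times\{0\}$ forward to time $\tau$ and backward from $T$ to time $\tau$ and showing that the forward image crosses the strip $[0,1]\times\mathbb{R}$ three times; your one-sided map $\Phi(\alpha)=u_{\alpha}'(T)$ could in principle capture the same phenomenon, but only after the three crossing intervals are identified, which your two-component picture does not do.

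A second, independent problem is the sign determination at the outer endpoint $\alpha\to1^{-}$. The hypotheses impose no rate condition on $g$ at $s=1$ (only local Lipschitz continuity and $g(1)=0$), so no first-order expansion of $\Phi$ near $\alpha=1$ is available, and the sign of $u_{\alpha}'(T)=-\lambda\int_{[0,\sigma]\cup[\tau,T]}a^{+}g(u_{\alpha})+\mu\int_{[\sigma,\tau]}a^{-}g(u_{\alpha})$ is not forced; it is not even clear that $\mathcal{A}$ contains a left neighbourhood of $1$, since the convex push on $[\sigma,\tau]$ may send such solutions above $1$. The paper deliberately avoids this point: its third solution lives in an interval $[l_{1},p_{2}]$ with $p_{2}<1$ whose endpoints are controlled by escape through $x=0$ and through $x=1$ respectively, never by the behaviour at $\alpha=1$. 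Finally, your closing remark that $\mu$ must be ``not so large'' that $u$ is driven up to $1$ misreads the statement: the conclusion must hold for \emph{every} $\mu>\mu^{*}(\lambda)$, and the correct resolution is not an upper bound on $\mu$ but the fact that for each such $\mu$ the admissible $\alpha$-intervals persist (they merely shrink), which is again exactly the three-crossings structure your count is missing.
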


To illustrate the dynamics of the parameter-dependent problem $(\mathcal{N}_{\lambda,\mu})$, we will look at the following example.

\begin{myexample}\label{example}
Consider the nonlinearity defined as
\begin{equation}\label{eq-1.2}
g(s):=s^{2}(1-s), \quad s\in\mathopen{[}0,1\mathclose{]},
\end{equation} 
coupled with a weight term of the form
\begin{equation}\label{eq-1.3}
a(t):=a_{1}\mathbbm{1}_{\mathopen{[}0,\sigma\mathclose{]}}(t)-a_{2}\mathbbm{1}_{\mathopen{]}\sigma,\tau\mathclose{[}}(t)+a_{3}\mathbbm{1}_{\mathopen{[}\tau,T\mathclose{]}}(t), \quad t\in\mathopen{[}0,T\mathclose{]},
\end{equation}
where $a_{1},a_{2},a_{3}\in\mathopen{]}0,+\infty\mathclose{[}$ are some fixed values and 
$\mathbbm{1}_{A}$ denotes the indicator function of set $A$ (see Figure~\ref{fig-1}~(a)--(b) for a graphical representation of these functions). The resulting problem $(\mathcal{N}_{\lambda,\mu})$ is in the setting of Theorem~\ref{th-1.1} (see Figure~\ref{fig-1}~(c) for the numerical evidence of the existence of three positive solutions to problem $(\mathcal{N}_{\lambda,\mu})$ for $\lambda$ and $\mu$ sufficiently large).
$\hfill\lhd$
\end{myexample}

\begin{figure}[htb]
\centering
\begin{subfigure}[t]{0.45\textwidth}
\centering
\begin{tikzpicture}[scale=1]
\begin{axis}[
  tick label style={font=\scriptsize},
  axis y line=left, 
  axis x line=bottom,
  xtick={0,1},
  ytick={0.1},
  xlabel={\small $s$},
  ylabel={\small $g(s)$},
every axis x label/.style={
    at={(ticklabel* cs:1.0)},
    anchor=west,
},
every axis y label/.style={
    at={(ticklabel* cs:1.0)},
    anchor=south,
},
  width=5.6cm, 
  height=4cm,
  xmin=0,
  xmax=1.15,
  ymin=0,
  ymax=0.18] 
\addplot [mark=none,domain=0:1,line width=1pt,smooth,color={rgb:red,0.1;green,0.9;blue,0.9}] {x*x*(1-x)};
\end{axis}
\end{tikzpicture}
\caption{Graph of the nonlinear term $g(s)$ defined as in \eqref{eq-1.2}.} 
\end{subfigure}
\hspace*{\fill}
\begin{subfigure}[t]{0.45\textwidth}
\centering
\begin{tikzpicture}[scale=1]
\begin{axis}[
  tick label style={font=\scriptsize},
  axis y line=left, 
  axis x line=middle,
  xtick={0,1,2},
  ytick={-1,0,1,2},
  xlabel={\small $t$},
  ylabel={\small $a(t)$},
every axis x label/.style={
    at={(ticklabel* cs:1.0)},
    anchor=west,
},
every axis y label/.style={
    at={(ticklabel* cs:1.0)},
    anchor=south,
},
  width=5.5cm,
  height=4.5cm,
  xmin=0,
  xmax=2.35,
  ymin=-1.25,
  ymax=2.5] 
\addplot [mark=none,domain=0:0.5,line width=1pt,smooth,red] {1.75};
\addplot [mark=none,domain=0.5:1,line width=1pt,smooth,red] {-1};
\addplot [mark=none,domain=1:2,line width=1pt,smooth,red] {1};
\addplot[mark=none,domain=0:2,line width=0.5pt,dashed,gray] coordinates {(0.5,-1)(0.5,1.75)};
\addplot[mark=none,domain=0:2,line width=0.5pt,dashed,gray] coordinates {(1,-1)(1,1)};
\addplot[mark=none,domain=0:2,line width=0.5pt,dashed,gray] coordinates {(2,0)(2,1)};
\end{axis}
\end{tikzpicture}
\caption{Graph of the weight term $a(t)$ defined as in \eqref{eq-1.3} with $\sigma=0.5$, $\tau=1$, $T=2$, $a_{1}=1.75$, $a_{2}=1$, $a_{3}=1$.}
\end{subfigure}
\\
\vspace{15pt}
\begin{subfigure}[t]{1\textwidth}
\centering
\begin{tikzpicture}[scale=1]
\begin{axis}[
  tick pos=left,
  tick label style={font=\scriptsize},
          scale only axis,
  enlargelimits=false,
  xtick={0,1,2},
  ytick={0,1},
  xlabel={\small $t$},
  ylabel={\small $u(t)$},
  max space between ticks=50,
                minor x tick num=4,
                minor y tick num=10,  
every axis x label/.style={
below,
at={(1.5cm,0cm)},
  yshift=-8pt
  },
every axis y label/.style={
below,
at={(0cm,1.5cm)},
  xshift=-3pt},
  y label style={rotate=90,anchor=south},
  width=3cm,
  height=3cm,  
  xmin=0,
  xmax=2,
  ymin=0,
  ymax=1] 
\addplot graphics[xmin=0,xmax=2,ymin=0,ymax=1] {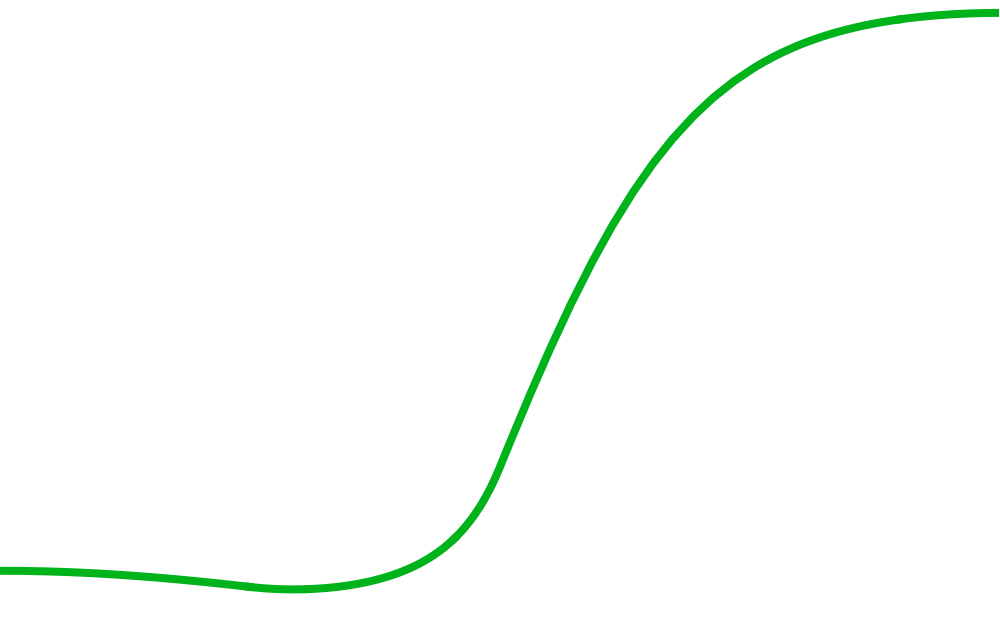};
\end{axis}
\end{tikzpicture} 
\hspace*{\fill}
\begin{tikzpicture}[scale=1]
\begin{axis}[
  tick pos=left,
  tick label style={font=\scriptsize},
          scale only axis,
  enlargelimits=false,
  xtick={0,1,2},
  ytick={0,1},
  xlabel={\small $t$},
  ylabel={\small $u(t)$},
  max space between ticks=50,
                minor x tick num=4,
                minor y tick num=10,  
  extra x tick labels={,,},
every axis x label/.style={
below,
at={(1.5cm,0cm)},
  yshift=-8pt
  },
every axis y label/.style={
below,
at={(0cm,1.5cm)},
  xshift=-3pt},
  every extra axis y label/.style={xshift=-0.5pt},
  y label style={rotate=90,anchor=south},
  width=3cm,
  height=3cm,  
  xmin=0,
  xmax=2,
  ymin=0,
  ymax=1] 
\addplot graphics[xmin=0,xmax=2,ymin=0,ymax=1] {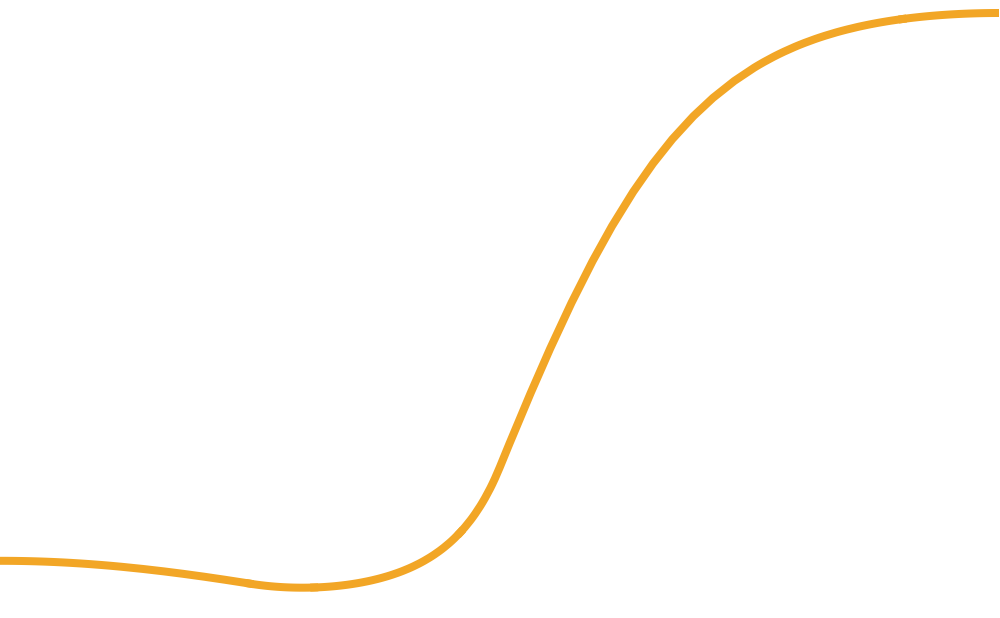};
\end{axis}
\end{tikzpicture} 
\hspace*{\fill}
\begin{tikzpicture}[scale=1]
\begin{axis}[
  tick pos=left,
  tick label style={font=\scriptsize},
          scale only axis,
  enlargelimits=false,
  xtick={0,1,2},
  ytick={0,1},
  max space between ticks=50,
                minor x tick num=4,
                minor y tick num=10,                
  xlabel={\small $t$},
  ylabel={\small $u(t)$},
every axis x label/.style={
below,
at={(1.5cm,0cm)},
  yshift=-8pt
  },
every axis y label/.style={
below,
at={(0cm,1.5cm)},
  xshift=-3pt},
  y label style={rotate=90,anchor=south},
  width=3cm,
  height=3cm,  
  xmin=0,
  xmax=2,
  ymin=0,
  ymax=1] 
\addplot graphics[xmin=0,xmax=2,ymin=0,ymax=1] {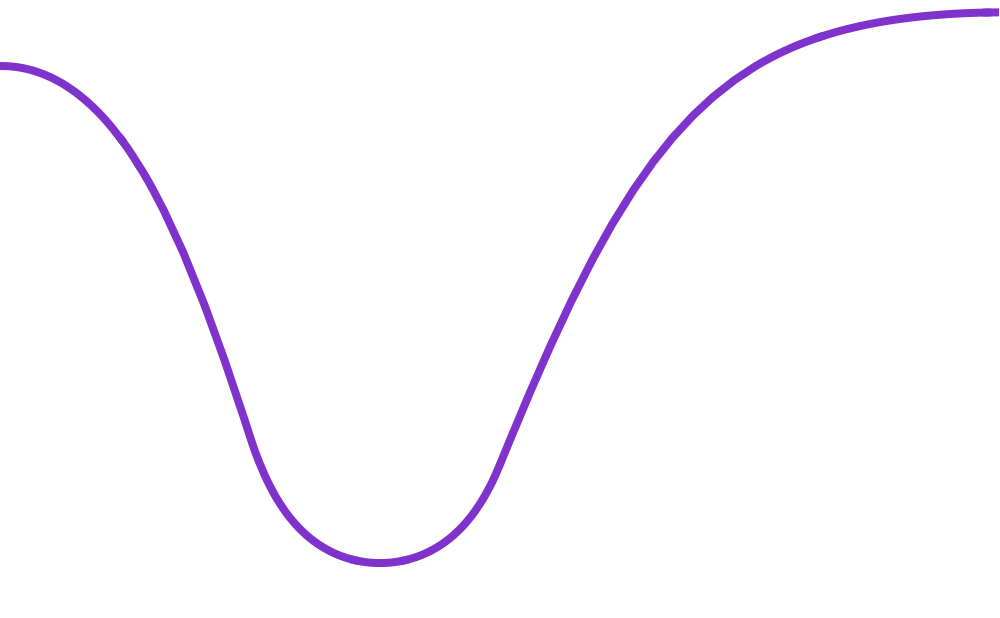};
\end{axis}
\end{tikzpicture} 
\caption{Three positive solutions of $(\mathcal{N}_{\lambda,\mu})$ for $\lambda=25$ and $\mu=500$.}
\end{subfigure} 
\caption{Multiplicity of positive solutions for the indefinite Neumann problem $(\mathcal{N}_{\lambda,\mu})$ as in the framework of Example.}        
\label{fig-1}
\end{figure}

As a strategy to prove Theorem~\ref{th-1.1}, we exploit the shooting method. With this respect, it is natural to study problem $(\mathcal{N}_{\lambda,\mu})$ in the phase-plane $(x,y)=(u,u')$. Accordingly, the differential equation in $(\mathcal{N}_{\lambda,\mu})$ can be equivalently written as a planar system in the following form
\begin{equation*}
\begin{cases}
\, x' = y, \\
\, y' = - \bigl{(} \lambda a^{+}(t) - \mu a^{-}(t) \bigr{)} g(x).
\end{cases}
\leqno{(\mathcal{S}_{\lambda,\mu})}
\end{equation*}
Thus, we consider the vector field associated with $(\mathcal{S}_{\lambda,\mu})$ in order to look at the corresponding deformation of the interval $\mathopen{[}0,1\mathclose{]}$ contained in the positive $x$-axis.
As an intuitive explanation of our approach, we state that we look for intersection points between two planar continua: the one obtained from shooting the set $X_{\mathopen{[}0,1\mathclose{]}}:=\mathopen{[}0,1\mathclose{]}\times\{0\}$ forward in time over $\mathopen{[}0,\tau\mathclose{]}$ with the other one obtained from shooting again the same set $X_{\mathopen{[}0,1\mathclose{]}}$ backward in time over $\mathopen{[}\tau,T\mathclose{]}$. We refer to Section~\ref{section-2.4} for a rigorous discussion and application of this topological tool.

\medskip

The present paper is organized as follows. In Section~\ref{section-2} we carry out the proof of Theorem~\ref{th-1.1}. In Section~\ref{section-3} we end with applications, focusing on the periodic problem and on radially symmetric solutions of elliptic PDEs in annular domains.

\section{Proof of Theorem~\ref{th-1.1}}\label{section-2}

In this section we will find three positive solutions for problem~$(\mathcal{N}_{\lambda,\mu})$, demonstrating our multiplicity result. Accordingly, the proof of Theorem~\ref{th-1.1} will be divided into four steps. First of all, we will study system~$(\mathcal{S}_{\lambda,\mu})$ separately in the three intervals: $\mathopen{[}0,\sigma\mathclose{]}$, $\mathopen{[}\sigma,\tau\mathclose{]}$ and $\mathopen{[}\tau,T\mathclose{]}$. Then, we will achieve the thesis combining the dynamics of system~$(\mathcal{S}_{\lambda,\mu})$ previously performed.

Throughout this section, we assume the following hypotheses on problem $(\mathcal{N}_{\lambda,\mu})$: the weight $a\in L^{1}(0,T)$ satisfies $(a_{*})$ and the function $g \colon \mathopen{[}0,1\mathclose{]} \to {\mathbb{R}}^{+}$ is locally Lipschitz continuous satisfying $(g_{*})$ and $(g_{0})$. 

As a first step, we extend the function $g(s)$ continuously to the whole real line, by setting
\begin{equation*}
g(s)= 0, \quad \text{for } \, s\in\mathopen{]}-\infty,0\mathclose{[}\cup\mathopen{]}1,+\infty\mathclose{[}.
\end{equation*}
The extension is still denoted by $g(s)$. In this manner, any solution of a Cauchy problem associated with $(\mathcal{S}_{\lambda,\mu})$ is globally defined on $\mathopen{[}0,T\mathclose{]}$.

Secondly, we introduce the following notation:
\begin{equation*}
A^{\pm}(t', t''):=\int_{t'}^{t''}a^{\pm}(\xi) \,d\xi, \quad t', t''\in\mathopen{[}0,T\mathclose{]} \, \text{ with } t'\leq t''.
\end{equation*}
Moreover, we set
\begin{equation*}
g_{*}(\kappa',\kappa'') := \min_{s\in\mathopen{[}\kappa',\kappa''\mathclose{]}}g(s), \quad \kappa',\kappa''\in\mathopen{[}0,1\mathclose{]} \, \text{ with } \kappa' < \kappa''.
\end{equation*}

\subsection{Study of system $(\mathcal{S}_{\lambda,\mu})$ in $\mathopen{[}0,\sigma\mathclose{]}$}\label{section-2.1}

In the interval $\mathopen{[}0,\sigma\mathclose{]}$ system~$(\mathcal{S}_{\lambda,\mu})$ reduces to
\begin{equation}\label{IVP-1}
\begin{cases}
\, x' = y, \\
\, y' = - \lambda a^{+}(t) g(x).
\end{cases}
\end{equation}

Since $A^{+}(0,0)=0$, $A^{+}(0,\sigma)>0$ and $t\mapsto A^{+}(0,t)$ is a continuous non-decreasing map on $\mathopen{[}0,\sigma\mathclose{]}$, without loss of generality, we can suppose that
\begin{equation*}
A^{+}(0,t)>0, \quad \forall \, t\in\mathopen{]}0,\sigma\mathclose{]}.
\end{equation*}
Otherwise, there exists a maximal interval $\mathopen{[}0,t_{0}\mathclose{]}$ where $A(0,t)=0$ for all $t\in\mathopen{[}0,t_{0}\mathclose{]}$ and the study of system \eqref{IVP-1} can be performed in the interval $\mathopen{[}t_{0},\sigma\mathclose{]}$.

We are going to prove that, for every initial condition $(x_{0},0)$ with $x_{0}\in\mathopen{]}0,1\mathclose{[}$, the solution $(x(t),y(t))$ of the Cauchy problem associated with \eqref{IVP-1} at time $t=\sigma$ belongs to $\mathopen{]}-\infty,0\mathclose{]}\times\mathopen{]}-\infty,0\mathclose{[}$ for $\lambda$ sufficiently large.

\begin{lemma}\label{lem-2.1}
Let $\lambda>0$, $\kappa_{1}\in\mathopen{]}0,1\mathclose{[}$ and $t_{1}\in\mathopen{]}0,\sigma\mathclose{[}$. For every $\gamma_{1} \geq \kappa_{1}/(\sigma-t_{1})$, any solution $(x(t),y(t))$ of \eqref{IVP-1} with $x(t_{1}) \leq \kappa_{1}$ and $y(t_{1}) \leq -\gamma_{1}$ satisfies $x(\sigma) \leq 0$ and $y(\sigma)\leq -\gamma_{1}$.
\end{lemma}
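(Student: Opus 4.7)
The proof is essentially a monotonicity argument exploiting the sign of the vector field on $[0,\sigma]$, so the plan is short. The key observation is that on this interval $a^{-}\equiv 0$ and $g\geq 0$ on all of $\mathbb{R}$ (after the extension), hence the $y$-component of the system is monotone non-increasing, and the $x$-component is then driven downward at a controlled rate.

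\textbf{Step 1 (monotonicity of $y$).} I would first note that on $\mathopen{[}0,\sigma\mathclose{]}$
\begin{equation*}
y'(t) = -\lambda\, a^{+}(t)\, g(x(t)) \leq 0,
\end{equation*}
since $\lambda>0$, $a^{+}(t)\geq 0$ a.e., and $g\geq 0$ on $\mathbb{R}$ (by the extension introduced just before Section~\ref{section-2.1}). Therefore $y$ is non-increasing on $\mathopen{[}t_{1},\sigma\mathclose{]}$, and from the hypothesis $y(t_{1})\leq -\gamma_{1}$ we immediately deduce
\begin{equation*}
y(t) \leq -\gamma_{1}, \qquad \forall\, t\in\mathopen{[}t_{1},\sigma\mathclose{]},
\end{equation*}
which in particular gives the second conclusion $y(\sigma)\leq -\gamma_{1}$.

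\textbf{Step 2 (integration to control $x$).} Since $x'=y\leq -\gamma_{1}$ on $\mathopen{[}t_{1},\sigma\mathclose{]}$, integrating from $t_{1}$ to $\sigma$ yields
\begin{equation*}
x(\sigma) = x(t_{1}) + \int_{t_{1}}^{\sigma} y(s)\,ds \leq \kappa_{1} - \gamma_{1}(\sigma-t_{1}).
\end{equation*}
The hypothesis $\gamma_{1}\geq \kappa_{1}/(\sigma-t_{1})$ then gives $x(\sigma)\leq 0$, finishing the proof.

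\textbf{Expected obstacles.} There is essentially no obstacle: the only point requiring a bit of care is that $x$ may become negative somewhere in $\mathopen{[}t_{1},\sigma\mathclose{]}$, but this does not invalidate the argument because the extension $g(s)=0$ for $s<0$ still yields $g\circ x\geq 0$, hence $y'\leq 0$ throughout. Note also that the parameter $\lambda$ plays no role here beyond being positive; the lemma is purely a transport estimate that sets up the initial data required to enter the middle (negative) interval $\mathopen{[}\sigma,\tau\mathclose{]}$ in the lower-left quadrant.
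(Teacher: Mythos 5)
Your proof is correct and follows essentially the same argument as the paper: the sign condition $y'=-\lambda a^{+}(t)g(x)\leq 0$ gives $y(t)\leq y(t_{1})\leq-\gamma_{1}$ on $\mathopen{[}t_{1},\sigma\mathclose{]}$, and integrating $x'=y$ yields $x(\sigma)\leq\kappa_{1}-\gamma_{1}(\sigma-t_{1})\leq 0$. Your additional remark that the extension $g\equiv 0$ outside $\mathopen{[}0,1\mathclose{]}$ keeps $g\circ x\geq 0$ even if $x$ turns negative is a valid point the paper leaves implicit.
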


\begin{proof}
Let $\lambda, \kappa_{1},t_{1}$ and $\gamma_{1}$ be fixed as in the statement.
Let $(x(t),y(t))$ be a solution of \eqref{IVP-1} with $x(t_{1}) \leq \kappa_{1}$ and $y(t_{1}) \leq -\gamma_{1}$.
Since $y'(t)\leq0$ on $\mathopen{[}0,\sigma\mathclose{]}$, we immediately obtain that
\begin{equation*}
y(t) \leq y(t_{1}) \leq -\gamma_{1}, \quad \text{for all } \, t\in \mathopen{[}t_{1},\sigma\mathclose{]},
\end{equation*}
and, consequently, we have
\begin{equation*}
x(\sigma) = x(t_{1}) + \int_{t_{1}}^{\sigma} y(\xi) \,d\xi \leq \kappa_{1} - \gamma_{1}(\sigma-t_{1}) \leq 0.
\end{equation*}
The thesis follows.
\end{proof}

\begin{lemma}\label{lem-2.2}
Let $\kappa_{0},\kappa_{1}$ be such that $0 < \kappa_{1} < \kappa_{0} < 1$ and $t_{1} \in\mathopen{]}0,\sigma\mathclose{[}$.
Given 
\begin{equation}\label{eq-lambda1}
\lambda^{\star}(\kappa_{0},\kappa_{1},t_{1}) := \dfrac{\kappa_{0}-\kappa_{1}}{g_{*}(\kappa_{1},\kappa_{0})\int_{0}^{t_{1}} A^{+}(0,\xi) \,d\xi}
\end{equation}
and $0<\gamma_{1} \leq (\kappa_{0}-\kappa_{1})/t_{1}$, then, for every $\lambda > \lambda^{\star}(\kappa_{0},\kappa_{1},t_{1})$, the solution $(x(t),y(t))$ of \eqref{IVP-1} with initial conditions $x(0) = \kappa_{0}$ and $y(0)=0$ satisfies $x(t_{1}) < \kappa_{1}$ and $y(t_{1}) < -\gamma_{1}$.
\end{lemma}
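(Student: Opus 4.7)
\textbf{Proof plan for Lemma~\ref{lem-2.2}.}
The plan is a direct energy/monotonicity estimate, using that in $\mathopen{[}0,\sigma\mathclose{]}$ the system \eqref{IVP-1} has the sign structure $y'\le 0$ (since $\lambda,a^{+},g\ge 0$). First I would observe that because $y(0)=0$ and $y'(t)=-\lambda a^{+}(t)g(x(t))\le 0$, the second component $y(t)$ is nonincreasing and nonpositive on $\mathopen{[}0,\sigma\mathclose{]}$; therefore $x(t)$ is nonincreasing on the same interval, with $x(0)=\kappa_{0}$. This monotonicity is the only qualitative fact I will need.

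Next I would establish the inequality $x(t_{1})<\kappa_{1}$ by contradiction. Suppose instead that $x(t_{1})\ge \kappa_{1}$. Then, by monotonicity of $x$, we have $x(s)\in\mathopen{[}\kappa_{1},\kappa_{0}\mathclose{]}$ for every $s\in\mathopen{[}0,t_{1}\mathclose{]}$, so $g(x(s))\ge g_{*}(\kappa_{1},\kappa_{0})$ on this interval. Integrating $y'=-\lambda a^{+}(t)g(x)$ from $0$ to $t$ yields
\begin{equation*}
y(t)\;\le\;-\lambda\, g_{*}(\kappa_{1},\kappa_{0})\, A^{+}(0,t),\qquad t\in\mathopen{[}0,t_{1}\mathclose{]},
\end{equation*}
and then integrating once more from $0$ to $t_{1}$ gives
\begin{equation*}
x(t_{1})\;=\;\kappa_{0}+\int_{0}^{t_{1}} y(s)\,ds\;\le\;\kappa_{0}-\lambda\, g_{*}(\kappa_{1},\kappa_{0})\int_{0}^{t_{1}} A^{+}(0,s)\,ds.
\end{equation*}
For $\lambda>\lambda^{\star}(\kappa_{0},\kappa_{1},t_{1})$ as in \eqref{eq-lambda1}, the right-hand side is strictly smaller than $\kappa_{1}$, contradicting $x(t_{1})\ge\kappa_{1}$.

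Finally, to derive the bound on $y(t_{1})$, I would exploit again the monotonicity of $y$: since $y(s)\ge y(t_{1})$ for all $s\in\mathopen{[}0,t_{1}\mathclose{]}$,
\begin{equation*}
x(t_{1})-\kappa_{0}\;=\;\int_{0}^{t_{1}} y(s)\,ds\;\ge\; t_{1}\,y(t_{1}).
\end{equation*}
Combining with the strict inequality $x(t_{1})<\kappa_{1}$ just proved, one gets $t_{1}\, y(t_{1})<\kappa_{1}-\kappa_{0}$, hence
\begin{equation*}
y(t_{1})\;<\;-\,\frac{\kappa_{0}-\kappa_{1}}{t_{1}}\;\le\;-\gamma_{1},
\end{equation*}
where the last inequality is exactly the standing hypothesis on $\gamma_{1}$. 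This closes the proof.

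There is no real obstacle here: the lemma is a quantitative energy estimate, and once the monotonicity of $(x,y)$ on $\mathopen{[}0,\sigma\mathclose{]}$ is noted, everything reduces to integrating twice and inserting the definition of $\lambda^{\star}$. The only delicate point is keeping track of strict versus non-strict inequalities, which is why the contradiction argument (rather than a direct estimate) is the cleanest route to the strict inequality $x(t_{1})<\kappa_{1}$.
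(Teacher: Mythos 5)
Your proof is correct and follows essentially the same route as the paper's: the same contradiction argument with the double integration of $y'\le-\lambda a^{+}(t)g_{*}(\kappa_{1},\kappa_{0})$ for the bound $x(t_{1})<\kappa_{1}$, and the same use of the monotonicity of $y$ for the bound on $y(t_{1})$ (the paper phrases this last step as a second contradiction, whereas you derive it directly, but the content is identical).
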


\begin{proof}
Let $\kappa_{0},\kappa_{1},t_{1},\gamma_{1}$ and $\lambda^{\star}(\kappa_{0},\kappa_{1},t_{1})$ be fixed as in the statement. For $\lambda > \lambda^{\star}(\kappa_{0},\kappa_{1},t_{1})$ consider the solution $(x(t),y(t))$ of \eqref{IVP-1} with $x(0) = \kappa_{0}$ and $y(0)=0$. 

First, we suppose by contradiction that $x(t_{1}) \geq \kappa_{1}$. Consequently, by the monotonicity of $x(t)$ in $\mathopen{[}0,\sigma\mathclose{]}$, we have
\begin{equation*}
0 < \kappa_{1} \leq x(t) \leq \kappa_{0} < 1, \quad \text{for all } \, t\in\mathopen{[}0,t_{1}\mathclose{]}.
\end{equation*}
Since $y'(t) \leq -\lambda a^{+}(t)g_{*}(\kappa_{1},\kappa_{0})$ on $\mathopen{[}0,t_{1}\mathclose{]}$, we obtain
\begin{equation*}
y(t) \leq -\lambda g_{*}(\kappa_{1},\kappa_{0}) A^{+}(0,t), \quad \text{for all } \, t\in\mathopen{[}0,t_{1}\mathclose{]}.
\end{equation*}
Then
\begin{equation*}
x(t) \leq x(0)-\lambda g_{*}(\kappa_{1},\kappa_{0}) \int_{0}^{t}A^{+}(0,\xi)\,d\xi, \quad \text{for all } \, t\in\mathopen{[}0,t_{1}\mathclose{]},
\end{equation*}
and, since $\lambda > \lambda^{\star}(\kappa_{0},\kappa_{1},t_{1})$, in particular we have
\begin{equation*}
x(t_{1}) \leq \kappa_{0}-\lambda g_{*}(\kappa_{1},\kappa_{0}) \int_{0}^{t_{1}}A^{+}(0,\xi)\,d\xi < \kappa_{1},
\end{equation*}
a contradiction.

Secondly, we suppose by contradiction that
\begin{equation*}
y(t) \geq -\gamma_{1}, \quad \text{for all } \, t\in\mathopen{[}0,t_{1}\mathclose{]}.
\end{equation*}
By integrating, we have
\begin{equation*}
x(t_{1}) = \kappa_{0} + \int_{0}^{t_{1}} y(\xi)\,d\xi \geq \kappa_{0}-\gamma_{1} t_{1} \geq \kappa_{1}.
\end{equation*}
A contradiction is achieved as above and the lemma is proved.
\end{proof}

Notice that in the previous lemmas condition $(g_{0})$ is not required to obtain the conclusions. On the contrary, in the next lemma this condition will be crucial (cf.~Remark~\ref{rem-2.2}). In more detail, for any fixed $\lambda>0$, taking initial conditions $(x(0),y(0))\in\mathopen{]}0,\delta\mathclose{]}\times\{0\}$ with $\delta>0$ small, then the solution $(x(t),y(t))$ of the Cauchy problem associated with system~\eqref{IVP-1} at time $t=\sigma$ belongs to $\mathopen{]}0,1\mathclose{[}\times\mathopen{]}-\infty,0\mathclose{[}$.

\begin{lemma}\label{lem-ang}
Let $\lambda>0$, $\nu\in\mathopen{]}0,\pi/2\mathclose{[}$ and $\kappa_{1}\in\mathopen{]}0,1\mathclose{[}$. Then, there exists $\hat{\varepsilon}=\hat{\varepsilon}(\lambda,\nu)>0$ such that for any $\varepsilon\in\mathopen{]}0,\hat{\varepsilon}\mathclose{[}$ there exists $\delta_{\varepsilon}\in\mathopen{]}0,\kappa_{1}\mathclose{[}$ such that the following holds: for any fixed $\kappa\in\mathopen{]}0,\delta_{\varepsilon}\mathclose{]}$, the solution $(x(t),y(t))$ of \eqref{IVP-1} with initial conditions $x(0)=\kappa$ and $y(0)=0$ satisfies
\begin{equation}\label{eq-arctan}
-\nu \leq \arctan \biggl{(} \dfrac{y(t)}{x(t)} \biggr{)} \leq 0, \quad \text{for all } \, t\in \mathopen{[}0,\sigma\mathclose{]}.
\end{equation}
\end{lemma}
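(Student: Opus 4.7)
The plan is to exploit $(g_0)$ to linearize the right-hand side near the origin: by making $g(s)\leq\varepsilon s$ along the solution, the trajectory starting from $(\kappa,0)$ cannot rotate by more than the prescribed angle $\nu$ on $\mathopen{[}0,\sigma\mathclose{]}$.

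First, I would record a cost-free observation that takes care of the upper bound. Since $y'(t)=-\lambda a^{+}(t)g(x(t))\leq 0$ on $\mathopen{[}0,\sigma\mathclose{]}$ and $y(0)=0$, we get $y(t)\leq 0$ throughout, so $x'(t)=y(t)\leq 0$ and $x$ is non-increasing. Consequently $x(t)\leq\kappa$ as long as $x(t)\geq 0$, which already yields $\arctan(y(t)/x(t))\leq 0$ wherever the quotient is defined.

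Second, I would calibrate the constants. Set
\[
\hat{\varepsilon} := \min\biggl\{ \frac{\tan\nu}{2\lambda A^{+}(0,\sigma)},\; \frac{1}{2\sigma\lambda A^{+}(0,\sigma)} \biggr\},
\]
and for $\varepsilon\in\mathopen{]}0,\hat{\varepsilon}\mathclose{[}$ apply $(g_0)$ to pick $\delta_{\varepsilon}\in\mathopen{]}0,\kappa_{1}\mathclose{[}$ so that $g(s)\leq\varepsilon s$ on $\mathopen{[}0,\delta_{\varepsilon}\mathclose{]}$. For $\kappa\in\mathopen{]}0,\delta_{\varepsilon}\mathclose{]}$ the monotonicity gives $x(t)\leq\kappa\leq\delta_{\varepsilon}$, hence, as long as $x(t)\geq 0$, $g(x(t))\leq\varepsilon\kappa$. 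Integrating yields $|y(t)|\leq\lambda\varepsilon\kappa A^{+}(0,\sigma)$, and therefore
\[
x(t) \geq \kappa - t\cdot\lambda\varepsilon\kappa A^{+}(0,\sigma) \geq \kappa\bigl(1-\sigma\lambda\varepsilon A^{+}(0,\sigma)\bigr) \geq \frac{\kappa}{2} > 0.
\]
A standard maximality argument (let $T^{*}\in\mathopen{]}0,\sigma\mathclose{]}$ be the first hypothetical time where $x(T^{*})=0$; on $\mathopen{[}0,T^{*}\mathclose{[}$ the previous estimate applies and contradicts $x(T^{*})=0$) rules out $x$ ever crossing zero, so the bound $x(t)\geq\kappa/2$ is valid on all of $\mathopen{[}0,\sigma\mathclose{]}$. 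Consequently
\[
\biggl|\frac{y(t)}{x(t)}\biggr| \leq \frac{\lambda\varepsilon\kappa A^{+}(0,\sigma)}{\kappa/2} = 2\lambda\varepsilon A^{+}(0,\sigma) \leq \tan\nu,
\]
which combined with $y(t)\leq 0<x(t)$ gives $-\nu\leq\arctan(y(t)/x(t))\leq 0$.

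I do not expect a single hard obstacle; the only point of care is the mild bootstrap above. The estimate $g(x)\leq\varepsilon x$ requires $x\in\mathopen{[}0,\delta_{\varepsilon}\mathclose{]}$, which monotonicity gives at no cost, while the uniform lower bound $x\geq\kappa/2$ prevents $x$ from escaping the good region by crossing zero. Choosing $\hat{\varepsilon}$ as the minimum of two natural linear thresholds, one enforcing positivity of $x$ and the other the slope bound $|y/x|\leq\tan\nu$, makes both requirements compatible simultaneously.
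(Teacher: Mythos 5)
Your proof is correct, but it takes a genuinely different route from the paper's. The paper passes to polar coordinates $x=\rho\cos\vartheta$, $y=\rho\sin\vartheta$, shows $\vartheta'\leq 0$, bounds the angular speed by $-\vartheta'(t)\leq\lambda\|a^{+}\|_{\infty}\varepsilon\cos^{2}\vartheta+\sin^{2}\vartheta$ using $(g_{0})$, and integrates this separable inequality (via the substitution $z=\tan\zeta$) to obtain the explicit bound $|\vartheta(t)|\leq\arctan\bigl(\sqrt{\lambda\|a^{+}\|_{\infty}\varepsilon}\,\tan\bigl(\sigma\sqrt{\lambda\|a^{+}\|_{\infty}\varepsilon}\bigr)\bigr)$, with $\hat{\varepsilon}$ chosen so that this quantity stays below $\nu$. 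You instead work directly with amplitude estimates: $|y(t)|\leq\lambda\varepsilon\kappa A^{+}(0,\sigma)$ from integrating $y'$, the lower bound $x(t)\geq\kappa/2$ from a continuation argument, and then the quotient bound $|y/x|\leq 2\lambda\varepsilon A^{+}(0,\sigma)\leq\tan\nu$. Your version is more elementary (no change to polar coordinates, no angular ODE comparison) and has two concrete advantages: it makes explicit the bootstrap guaranteeing $x(t)>0$ on all of $\mathopen{[}0,\sigma\mathclose{]}$, which the paper leaves implicit although it is needed both for $\arctan(y/x)$ to be defined and for the estimate $g(x)\leq\varepsilon x$ to apply; and it only uses the $L^{1}$ quantity $A^{+}(0,\sigma)$, whereas the paper's computation invokes $\|a^{+}\|_{\infty}$, which is not controlled by the standing hypothesis $a\in L^{1}(0,T)$. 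What the paper's angular argument buys in exchange is a sharper, scale-invariant control of the rotation that does not require the solution to stay bounded away from the origin in the $x$-variable by a fixed fraction of $\kappa$; for the purposes of this lemma both yield the same conclusion.
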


\begin{proof}
Let $\lambda, \nu$ and $\kappa_{1}$ be fixed as in the statement.
Let $\hat{\varepsilon}=\hat{\varepsilon}(\lambda,\nu)>0$ be such that
\begin{equation}\label{eq-epsilon}
\arctan\Bigl{(}\sqrt{\lambda\|a^{+}\|_{\infty}\varepsilon} \tan\bigr{(}\sigma\sqrt{\lambda\|a^{+}\|_{\infty}\varepsilon} \bigr{)}\Bigr{)} < \nu, \quad \text{for all } \, \varepsilon\in\mathopen{]}0,\hat{\varepsilon}\mathclose{[},
\end{equation}
where, as usual, we denote by $\|\cdot\|_{\infty}$ the supremum norm.
From hypothesis $(g_{0})$, for all $\varepsilon>0$ there exists $\delta_{\varepsilon}\in\mathopen{]}0,\kappa_{1}\mathclose{[}$ such that
\begin{equation*}
g(s) \leq \varepsilon s, \quad \text{for all } \, s\in\mathopen{[}0,\delta_{\varepsilon}\mathclose{]}.
\end{equation*}
For $\kappa\in\mathopen{]}0,\delta_{\varepsilon}\mathclose{]}$, we consider the solution $(x(t),y(t))$ of \eqref{IVP-1} with $x(0)=\kappa$ and $y(0)=0$.

First of all, we write the solution in polar coordinates
\begin{equation*}
x(t) = \rho(t) \cos(\vartheta(t)), \quad y(t) = \rho(t) \sin(\vartheta(t)).
\end{equation*}
For $t\in \mathopen{[}0,\sigma\mathclose{]}$ we have
\begin{equation*}
\vartheta(t) = \arctan \biggl{(} \dfrac{y(t)}{x(t)} \biggr{)}
\end{equation*}
and
\begin{equation*}
\vartheta'(t) = \dfrac{y'(t)x(t)-x'(t)y(t)}{x^{2}(t)+y^{2}(t)} = \dfrac{-\lambda a^{+}(t) g(x(t)) x(t) - y^{2}(t)}{\rho^{2}(t)} \leq 0.
\end{equation*}
Then, since $\vartheta(0)=0$,
\begin{equation*}
\vartheta(t) = \arctan \biggl{(} \dfrac{y(t)}{x(t)} \biggr{)} \leq 0, \quad \text{for all } \, t\in \mathopen{[}0,\sigma\mathclose{]}.
\end{equation*}
Therefore, in order to conclude the proof, we have to prove the first inequality in \eqref{eq-arctan}.
Let $\varepsilon\in\mathopen{]}0,\hat{\varepsilon}\mathclose{[}$, then
\begin{align*}
-\vartheta'(t) 
&= \dfrac{\lambda a^{+}(t) g(x(t)) x(t) + y^{2}(t)}{\rho^{2}(t)}
\leq \dfrac{\lambda a^{+}(t) \varepsilon x^{2}(t) + y^{2}(t)}{\rho^{2}(t)}
\\ &\leq \lambda \|a^{+}\|_{\infty} \varepsilon \cos^{2}(\vartheta(t)) + \sin^{2}(\vartheta(t)),\quad \text{for all } \, t\in \mathopen{[}0,\sigma\mathclose{]}.
\end{align*}
By integrating on $\mathopen{[}0,t\mathclose{]}$, we obtain
\begin{equation*}
-\int_{\vartheta(0)}^{\vartheta(t)} \dfrac{d\zeta}{\lambda\|a^{+}\|_{\infty}\varepsilon \cos^{2}(\zeta) + \sin^{2}(\zeta) }
\leq \int_{0}^{t} d\xi = t \leq \sigma, \quad \text{for all } \, t\in \mathopen{[}0,\sigma\mathclose{]}.
\end{equation*}
The first term can be equivalently written as
\begin{align*}
&-\int_{\vartheta(0)}^{\vartheta(t)} \dfrac{d\zeta}{\lambda\|a^{+}\|_{\infty}\varepsilon \cos^{2}(\zeta) + \sin^{2}(\zeta) } =
\\ &= \int_{\vartheta(t)}^{0} \dfrac{d\zeta}{\cos^{2}(\zeta) \bigr{(}\lambda\|a^{+}\|_{\infty}\varepsilon + \tan^{2}(\zeta) \bigl{)} }
\\ &= -\int_{\tan(\vartheta(t))}^{0} \dfrac{dz}{\lambda\|a^{+}\|_{\infty}\varepsilon + z^{2}}
\\ &= \dfrac{1}{\sqrt{\lambda\|a^{+}\|_{\infty}\varepsilon}}\arctan\biggl{(}\dfrac{\tan|\vartheta(t)|}{\sqrt{\lambda\|a^{+}\|_{\infty}\varepsilon}} \biggr{)}, \quad \text{for all } \, t\in \mathopen{[}0,\sigma\mathclose{]}.
\end{align*}
Consequently
\begin{equation*}
|\vartheta(t)| \leq \arctan\Bigl{(}\sqrt{\lambda\|a^{+}\|_{\infty}\varepsilon} \tan\bigr{(}\sigma\sqrt{\lambda\|a^{+}\|_{\infty}\varepsilon} \bigr{)}\Bigr{)}, \quad \text{for all } \, t\in \mathopen{[}0,\sigma\mathclose{]}.
\end{equation*}
At last, by the choice of $\varepsilon\in\mathopen{]}0,\hat{\varepsilon}\mathclose{[}$ and condition \eqref{eq-epsilon}, formula \eqref{eq-arctan} is proved.
\end{proof}

\subsection{Study of system $(\mathcal{S}_{\lambda,\mu})$ in $\mathopen{[}\tau,T\mathclose{]}$}\label{section-2.2}

System~$(\mathcal{S}_{\lambda,\mu})$ in the interval $\mathopen{[}\tau,T\mathclose{]}$ can be equivalently written as \eqref{IVP-1}.
Since $A^{+}(T,T)=0$, $A^{+}(\tau,T)>0$ and $t\mapsto A^{+}(t,T)$ is a continuous non-increasing map on $\mathopen{[}\tau,T\mathclose{]}$, without loss of generality, we can suppose that
\begin{equation*}
A^{+}(t,T)>0, \quad \forall \, t\in\mathopen{[}\tau,T\mathclose{[}.
\end{equation*}
Otherwise, there exists a maximal interval $\mathopen{[}t_{T},T\mathclose{]}$ where $A^{+}(t,T)=0$ for all $t\in\mathopen{[}t_{T},T\mathclose{]}$ and the study of system \eqref{IVP-1} can be performed in the interval $\mathopen{[}\tau,t_{T}\mathclose{]}$.

In this context, the situation is exactly symmetric to the one described in Lemma~\ref{lem-2.1} and Lemma~\ref{lem-2.2}. We collect here the corresponding results, passing over the proofs since they are analogous to the previous ones.

\begin{lemma}\label{lem-2.4}
Let $\lambda>0$, $\kappa_{3}\in\mathopen{]}0,1\mathclose{[}$ and $t_{3}\in\mathopen{]}\tau,T\mathclose{[}$. 
For every $\gamma_{3} \geq \kappa_{3}/(t_{3}-\tau)$, any solution $(x(t),y(t))$ of \eqref{IVP-1} with $x(t_{3}) \leq \kappa_{3}$ and $y(t_{3}) \geq \gamma_{3}$ satisfies $x(\tau) \leq 0$ and $y(\tau)\geq \gamma_{3}$.
\end{lemma}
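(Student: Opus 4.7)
The plan is to mirror the proof of Lemma~\ref{lem-2.1} with the time direction reversed, exploiting the left--right symmetry between the intervals $\mathopen{[}0,\sigma\mathclose{]}$ and $\mathopen{[}\tau,T\mathclose{]}$. The structural input is identical: on $\mathopen{[}\tau,T\mathclose{]}$ system $(\mathcal{S}_{\lambda,\mu})$ reduces to \eqref{IVP-1}, and since $a^{+}\geq 0$ and $g\geq 0$ (using the extension to the whole real line introduced at the start of the section), we have $y'(t)=-\lambda a^{+}(t)g(x(t))\leq 0$ throughout $\mathopen{[}\tau,T\mathclose{]}$, so $y$ stays non-increasing.

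First I would use this monotonicity to propagate the lower bound on $y(t_{3})$ backwards to $\tau$: for every $t\in\mathopen{[}\tau,t_{3}\mathclose{]}$ one has $y(t)\geq y(t_{3})\geq \gamma_{3}$, which in particular gives $y(\tau)\geq \gamma_{3}$ and settles the second conclusion. Then I would integrate the first equation of \eqref{IVP-1} on $\mathopen{[}\tau,t_{3}\mathclose{]}$ to obtain
\begin{equation*}
x(\tau)=x(t_{3})-\int_{\tau}^{t_{3}}y(\xi)\,d\xi\leq \kappa_{3}-\gamma_{3}(t_{3}-\tau)\leq 0,
\end{equation*}
where the final inequality uses the standing hypothesis $\gamma_{3}\geq \kappa_{3}/(t_{3}-\tau)$.

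I do not anticipate any genuine obstacle: the statement is the exact time-reversed twin of Lemma~\ref{lem-2.1}, and the only thing to watch is the sign bookkeeping when one integrates from $\tau$ to $t_{3}$ rather than from $t_{1}$ to $\sigma$. This is consistent with the authors' remark that the proofs of the lemmas in this subsection are omitted because they are analogous to those of Section~\ref{section-2.1}.
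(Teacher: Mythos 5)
Your proof is correct and follows exactly the route the paper intends: the authors omit the proof of Lemma~\ref{lem-2.4} precisely because it is the time-reversed analogue of Lemma~\ref{lem-2.1}, and your argument (monotonicity of $y$ from $y'\leq 0$, then integration of $x'=y$ backwards from $t_{3}$ to $\tau$) is that analogue with the sign bookkeeping done correctly.
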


\begin{lemma}\label{lem-2.5}
Let $\kappa_{3},\kappa_{T}$ be such that $0 < \kappa_{3} < \kappa_{T} < 1$ and $t_{3}\in\mathopen{]}\tau,T\mathclose{[}$. 
Given 
\begin{equation}\label{eq-lambda2}
\lambda^{\star\star}(\kappa_{3},\kappa_{T},t_{3}) := \dfrac{\kappa_{T}-\kappa_{3}}{g_{*}(\kappa_{3},\kappa_{T})\int_{t_{3}}^{T} A^{+}(\xi,T) \,d\xi}
\end{equation}
and $0<\gamma_{3}\leq(\kappa_{T}-\kappa_{3})/(T-t_{3})$, then, for every $\lambda > \lambda^{\star\star}(\kappa_{3},\kappa_{T},t_{3})$, the solution $(x(t),y(t))$ of \eqref{IVP-1} with initial conditions $x(T) = \kappa_{T}$ and $y(T)=0$ satisfies $x(t_{3}) < \kappa_{3}$ and $y(t_{3}) > \gamma_{3}$.
\end{lemma}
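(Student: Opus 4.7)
The plan is to adapt the argument of Lemma~\ref{lem-2.2} to a backward shooting from the terminal point $t=T$. As a preliminary step I would establish the basic monotonicity on $\mathopen{[}\tau,T\mathclose{]}$: since $y'(t)=-\lambda a^{+}(t)g(x(t))\leq 0$, the map $y$ is non-increasing, so the condition $y(T)=0$ forces $y(t)\geq 0$ throughout $\mathopen{[}\tau,T\mathclose{]}$; in turn $x'(t)=y(t)\geq 0$ makes $x$ non-decreasing, whence $x(t)\leq \kappa_{T}<1$ on $\mathopen{[}\tau,T\mathclose{]}$.

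For the first assertion I argue by contradiction, supposing $x(t_{3})\geq \kappa_{3}$. Monotonicity of $x$ then gives $\kappa_{3}\leq x(t)\leq \kappa_{T}$ on $\mathopen{[}t_{3},T\mathclose{]}$, so that
\begin{equation*}
y'(t)\leq -\lambda a^{+}(t)\, g_{*}(\kappa_{3},\kappa_{T}), \quad t\in \mathopen{[}t_{3},T\mathclose{]}.
\end{equation*}
Integrating from $t$ to $T$ and using $y(T)=0$ yields $y(t)\geq \lambda g_{*}(\kappa_{3},\kappa_{T})\,A^{+}(t,T)$ on $\mathopen{[}t_{3},T\mathclose{]}$; a further integration of $x'=y$ from $t_{3}$ to $T$ produces
\begin{equation*}
x(t_{3})\leq \kappa_{T}-\lambda g_{*}(\kappa_{3},\kappa_{T})\int_{t_{3}}^{T}A^{+}(\xi,T)\,d\xi,
\end{equation*}
which is strictly less than $\kappa_{3}$ by the choice $\lambda>\lambda^{\star\star}(\kappa_{3},\kappa_{T},t_{3})$, a contradiction.

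For the velocity estimate at $t_{3}$, I would again argue by contradiction: if $y(t_{3})\leq \gamma_{3}$ then, because $y$ is non-increasing, $y(t)\leq \gamma_{3}$ throughout $\mathopen{[}t_{3},T\mathclose{]}$; integrating $x'=y$ from $t_{3}$ to $T$ gives $\kappa_{T}-x(t_{3})\leq \gamma_{3}(T-t_{3})\leq \kappa_{T}-\kappa_{3}$, that is $x(t_{3})\geq \kappa_{3}$, contradicting the first part of the proof. Since the argument is nothing more than a time-reversed mirror of Lemma~\ref{lem-2.2}, I do not expect any real obstacle; the only care required is in the bookkeeping of signs when integrating backward from $T$ and in verifying that the preliminary monotonicity legitimately confines $x$ to $\mathopen{[}\kappa_{3},\kappa_{T}\mathclose{]}$ so that $g_{*}(\kappa_{3},\kappa_{T})$ is a valid lower bound for $g\circ x$.
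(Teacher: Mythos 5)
Your proof is correct and is precisely the time-reversed mirror of Lemma~\ref{lem-2.2} that the paper itself invokes when it omits the proofs in Section~\ref{section-2.2} as ``analogous to the previous ones''; all the sign bookkeeping (monotonicity of $y$ from $y(T)=0$, the lower bound $y(t)\geq\lambda g_{*}(\kappa_{3},\kappa_{T})A^{+}(t,T)$, and the two contradictions) checks out.
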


\subsection{Study of system $(\mathcal{S}_{\lambda,\mu})$ in $\mathopen{[}\sigma,\tau\mathclose{]}$}\label{section-2.3}

Consider now the interval $\mathopen{[}\sigma,\tau\mathclose{]}$, where system~$(\mathcal{S}_{\lambda,\mu})$ reduces to
\begin{equation}\label{IVP-2}
\begin{cases}
\, x' = y, \\
\, y' = \mu a^{-}(t) g(x).
\end{cases}
\end{equation}

Without loss of generality, we can suppose that $A^{-}(\sigma,t)>0$ for all $t\in\mathopen{]}\sigma,\tau\mathclose{]}$. Indeed, it is always possible to choose a suitable $\sigma$ as in $(a_{*})$ that satisfies this additional hypothesis, as pointed out in \cite{BoFeZa-17tams,FeZa-15jde,FeZa-17,SoZa-17}.

Our purpose is to determine the initial conditions $(x(\sigma),y(\sigma))$ such that the corresponding solution $(x(t),y(t))$ of the Cauchy problem associated with system~\eqref{IVP-2} belongs to $\mathopen{[}1,+\infty\mathclose{[}\times\mathopen{]}0,+\infty\mathclose{[}$ at time $t=\tau$, for $\mu$ sufficiently large.

\begin{lemma}\label{lem-2.6}
Let $\mu>0$, $\kappa_{2}\in\mathopen{]}0,1\mathclose{[}$ and $t_{2}\in\mathopen{]}\sigma,\tau\mathclose{[}$. For every $\omega\geq(1-\kappa_{2})/(\tau-t_{2})$, any solution $(x(t),y(t))$ of \eqref{IVP-2} with $x(t_{2})\geq \kappa_{2}$ and $y(t_{2})\geq \omega$ satisfies $x(\tau) \geq 1$ and $y(\tau)\geq \omega$.
\end{lemma}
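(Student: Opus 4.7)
The plan is to mirror Lemma~\ref{lem-2.1}, but working \emph{forward} in time on $\mathopen{[}t_{2},\tau\mathclose{]}$ and exploiting the fact that on $\mathopen{[}\sigma,\tau\mathclose{]}$ the equation is $y' = +\mu a^{-}(t)g(x)$, whose right-hand side is nonnegative (rather than nonpositive as in the $\mathopen{[}0,\sigma\mathclose{]}$-case). Hence monotonicity of $y$ flips accordingly: here $y$ is non-decreasing, whereas in Lemma~\ref{lem-2.1} it was non-increasing. The conclusion that $x(\tau)\geq 1$ is obtained by integrating $x'=y$ and using the uniform lower bound on $y$.

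More precisely, I would argue as follows. First, because $g$ has been extended to vanish outside $\mathopen{[}0,1\mathclose{]}$ and $g\geq 0$ on $\mathopen{[}0,1\mathclose{]}$, one has $g(x(t))\geq 0$ for every $t$, irrespective of where $x(t)$ lies. Combined with $a^{-}(t)\geq 0$ a.e.\ on $\mathopen{[}\sigma,\tau\mathclose{]}$, this gives $y'(t)=\mu a^{-}(t)g(x(t))\geq 0$ a.e.\ on $\mathopen{[}t_{2},\tau\mathclose{]}$. Therefore $y(t)\geq y(t_{2})\geq \omega$ for all $t\in\mathopen{[}t_{2},\tau\mathclose{]}$, which already yields $y(\tau)\geq \omega$, one half of the thesis.

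For the remaining inequality, integrate $x'=y$ from $t_{2}$ to $\tau$:
\begin{equation*}
x(\tau) = x(t_{2}) + \int_{t_{2}}^{\tau} y(\xi)\,d\xi \geq \kappa_{2} + \omega(\tau - t_{2}) \geq \kappa_{2} + (1-\kappa_{2}) = 1,
\end{equation*}
where the last step uses exactly the hypothesis $\omega \geq (1-\kappa_{2})/(\tau - t_{2})$. This closes the argument.

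There is no real obstacle: the statement is the natural analogue of Lemma~\ref{lem-2.1}, and once the sign of $y'$ is correctly identified the proof is a two-line integration. The only subtlety worth flagging is the use of the global extension of $g$ by zero, which ensures $g(x(t))\geq 0$ without having to first confirm that $x(t)$ remains in $\mathopen{[}0,1\mathclose{]}$ on $\mathopen{[}t_{2},\tau\mathclose{]}$ (indeed, the conclusion is precisely that $x$ leaves $\mathopen{[}0,1\mathclose{]}$ by time $\tau$).
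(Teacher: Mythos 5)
Your proposal is correct and follows essentially the same route as the paper: observe that $y'=\mu a^{-}(t)g(x)\geq 0$ on $\mathopen{[}\sigma,\tau\mathclose{]}$, deduce $y(t)\geq y(t_{2})\geq\omega$ on $\mathopen{[}t_{2},\tau\mathclose{]}$, and integrate $x'=y$ to get $x(\tau)\geq\kappa_{2}+\omega(\tau-t_{2})\geq 1$. Your remark about the extension of $g$ by zero guaranteeing $g(x(t))\geq 0$ without first confining $x(t)$ to $\mathopen{[}0,1\mathclose{]}$ is a sensible clarification that the paper leaves implicit.
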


\begin{proof}
Let $\mu,\kappa_{2},t_{2}$ and $\omega$ be fixed as in the statement.
Let $(x(t),y(t))$ be a solution of \eqref{IVP-2} with $x(t_{2}) \geq \kappa_{2}$ and $y(t_{2}) \geq \omega$.
Since $y'(t)\geq 0$ on $\mathopen{[}\sigma,\tau\mathclose{]}$, we immediately obtain that $y(t)\geq y(t_{2}) \geq \omega$ for every $t\in \mathopen{[}t_{2},\tau\mathclose{]}$. In particular, it follows that $y(\tau)\geq \omega$. Moreover, we have 
\begin{equation*}
x(\tau) = x(t_{2}) + \int_{t_{2}}^{\tau} y(\xi) \,d\xi \geq \kappa_{2} + \omega (\tau - t_{2}) \geq 1.
\end{equation*}
The thesis follows.
\end{proof}

\begin{lemma}\label{lem-2.7}
Let $\kappa_{\sigma},\kappa_{2}$ be such that $0 < \kappa_{\sigma} < \kappa_{2} < 1$ and $\omega_{\sigma} > 0$.
Given 
\begin{equation*}
\sigma< t_{2} \leq \min\biggl{\{}\sigma + \dfrac{\kappa_{\sigma}}{2\omega_{\sigma}},\tau \biggr{\}}, \quad 0 < \omega \leq \dfrac{\kappa_{2}-\kappa_{\sigma}}{t_{2}-\sigma},
\end{equation*} 
and
\begin{equation}\label{eq-mu}
\mu^{\star}(\kappa_{2},\kappa_{\sigma},t_{2},\omega_{\sigma}) := \dfrac{\kappa_{2} - \kappa_{\sigma} + (t_{2}-\sigma) \omega_{\sigma}}{g_{*}(\kappa_{\sigma}/2,\kappa_{2}) \int_{\sigma}^{t_{2}} A^{-}(\sigma,\xi) \,d\xi},
\end{equation}
then, for every $\mu>\mu^{\star}(\kappa_{2},\kappa_{\sigma},t_{2},\omega_{\sigma})$, any solution $(x(t),y(t))$ of \eqref{IVP-2} with $x(\sigma) = \kappa_{\sigma}$ and $y(\sigma) \geq -\omega_{\sigma}$ satisfies $x(t_{2})> \kappa_{2}$ and $y(t_{2})> \omega$.
\end{lemma}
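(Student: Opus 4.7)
The plan is to mimic the contradiction strategy of Lemma~\ref{lem-2.2}, but now with an added complication: because the prescribed initial velocity $y(\sigma)$ may be as negative as $-\omega_{\sigma}$, the solution need not be monotone, and I must first control how far $x(t)$ can dip below $\kappa_{\sigma}$ before the restoring effect of $y'=\mu a^{-}(t)g(x)\ge 0$ pushes it back up. Since $y$ is non-decreasing on $\mathopen{[}\sigma,\tau\mathclose{]}$, the bound $y(t)\ge -\omega_{\sigma}$ propagates forward, giving $x(t)\ge \kappa_{\sigma}-\omega_{\sigma}(t-\sigma)\ge \kappa_{\sigma}/2$ on $\mathopen{[}\sigma,t_{2}\mathclose{]}$ thanks to the choice $t_{2}\le \sigma+\kappa_{\sigma}/(2\omega_{\sigma})$. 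This explains, and is exactly what justifies, the appearance of $g_{*}(\kappa_{\sigma}/2,\kappa_{2})$ in the denominator of $\mu^{\star}$.

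Next, for the bound $x(t_{2})>\kappa_{2}$ I would argue by contradiction: assume $x(t_{2})\le \kappa_{2}$. The shape of the trajectory (first non-increasing while $y\le 0$, then non-decreasing once $y$ becomes positive) forces $x(t)\le \max\{\kappa_{\sigma},x(t_{2})\}\le \kappa_{2}$ on the whole interval $\mathopen{[}\sigma,t_{2}\mathclose{]}$. Combined with the lower bound just obtained, $x(t)\in \mathopen{[}\kappa_{\sigma}/2,\kappa_{2}\mathclose{]}$, hence $g(x(t))\ge g_{*}(\kappa_{\sigma}/2,\kappa_{2})$. Integrating the differential equation yields
\begin{equation*}
y(t)\ge -\omega_{\sigma}+\mu\, g_{*}(\kappa_{\sigma}/2,\kappa_{2})\,A^{-}(\sigma,t),
\end{equation*}
and integrating once more from $\sigma$ to $t_{2}$,
\begin{equation*}
x(t_{2})\ge \kappa_{\sigma}-\omega_{\sigma}(t_{2}-\sigma)+\mu\, g_{*}(\kappa_{\sigma}/2,\kappa_{2})\int_{\sigma}^{t_{2}}\!A^{-}(\sigma,\xi)\,d\xi.
\end{equation*}
The hypothesis $\mu>\mu^{\star}$ is designed precisely so that the right-hand side exceeds $\kappa_{2}$, producing the contradiction.

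For the companion inequality $y(t_{2})>\omega$, once $x(t_{2})>\kappa_{2}$ is established a short dual argument suffices. Assuming $y(t_{2})\le \omega$, the monotonicity of $y$ gives $y(t)\le \omega$ on $\mathopen{[}\sigma,t_{2}\mathclose{]}$, so $x(t_{2})\le \kappa_{\sigma}+\omega(t_{2}-\sigma)\le \kappa_{2}$ by the assumption $\omega\le (\kappa_{2}-\kappa_{\sigma})/(t_{2}-\sigma)$, contradicting what was just proved.

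The main obstacle, and the only genuinely new ingredient compared with Lemma~\ref{lem-2.2}, is the two-sided control of $x(t)$ on $\mathopen{[}\sigma,t_{2}\mathclose{]}$: the lower control requires carefully choosing $t_{2}$ so small that the initial downward motion of $x$ (driven by the worst-case $y(\sigma)=-\omega_{\sigma}$) cannot exhaust $\kappa_{\sigma}/2$, while the upper control relies on the qualitative U-shape of $x$ forced by the sign of $y'$. Once both are in hand, the rest is bookkeeping parallel to Section~\ref{section-2.1}.
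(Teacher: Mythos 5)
Your proposal is correct and follows essentially the same route as the paper: the same lower bound $x(t)\geq\kappa_{\sigma}/2$ from $y(t)\geq-\omega_{\sigma}$ and the choice of $t_{2}$, the same contradiction via double integration of the equation against $\mu^{\star}$, and the same dual argument for $y(t_{2})>\omega$. Your ``U-shape'' justification of the upper bound $x(t)\leq\kappa_{2}$ is just the paper's convexity argument ($x''=\mu a^{-}(t)g(x)\geq 0$) in different words.
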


\begin{proof}
Let $\kappa_{\sigma},\kappa_{2},\omega_{\sigma},t_{2},\omega$ and $\mu^{\star}(\kappa_{2},\kappa_{\sigma},t_{2},\omega_{\sigma})$ be fixed as in the statement.
For $\mu>\mu^{\star}(\kappa_{2},\kappa_{\sigma},t_{2},\omega_{\sigma})$, let $(x(t),y(t))$ be a solution of \eqref{IVP-2} with $x(\sigma) = \kappa_{\sigma}$ and $y(\sigma) \geq -\omega_{\sigma}$.

First, we suppose by contradiction that $x(t_{2})\leq\kappa_{2}$. This way, by the convexity of the function $x(t)$ in $\mathopen{[}\sigma,\tau\mathclose{]}$ and the assumption $\kappa_{2}>\kappa_{\sigma}$, we easily deduce that 
\begin{equation*}
x(t) \leq \kappa_{2}, \quad \text{for all } \, t\in \mathopen{[}\sigma,t_{2}\mathclose{]}.
\end{equation*}
Since $y'(t)\geq0$ on $\mathopen{[}\sigma,\tau\mathclose{]}$ and $y(\sigma) \geq -\omega_{\sigma}$, we derive that
\begin{equation*}
x(t) \geq -\omega_{\sigma} t + \kappa_{\sigma} + \omega_{\sigma} \sigma,
\quad \text{for all } \, t\in\mathopen{[}\sigma,\tau\mathclose{]},
\end{equation*}
and, by the condition on the point $t_{2}$, we obtain that
\begin{equation*}
x(t) \geq \dfrac{\kappa_{\sigma}}{2}, \quad \text{for all } \, t\in \mathopen{[}\sigma,t_{2}\mathclose{]}.
\end{equation*}
By an integration of \eqref{IVP-2}, for every $t\in \mathopen{[}\sigma,t_{2}\mathclose{]}$, we have
\begin{equation*}
y(t) = y(\sigma) + \int_{\sigma}^{t} y'(\xi) \,d\xi = y(\sigma) + \mu \int_{\sigma}^{t} a^{-}(\xi) g(x(\xi)) \,d\xi
\end{equation*}
and
\begin{equation*}
x(t) = x(\sigma) + \int_{\sigma}^{t} y(\xi) \,d\xi 
= \kappa_{\sigma} + (t-\sigma) y(\sigma) + \mu \int_{\sigma}^{t} \int_{\sigma}^{z} a^{-}(\xi) g(x(\xi)) \,d\xi dz.
\end{equation*}
Then, by the choice of $\mu>\mu^{\star}(\kappa_{2},\kappa_{\sigma},t_{2},\omega_{\sigma})$, it follows that
\begin{align*}
\kappa_{2} \geq x(t_{2}) \geq \kappa_{\sigma} - (t_{2}-\sigma) \omega_{\sigma} + \mu g_{*}(\kappa_{\sigma}/2,\kappa_{2}) \int_{\sigma}^{t_{2}} A^{-}(\sigma,\xi) \,d\xi > \kappa_{2},
\end{align*}
a contradiction.

Secondly, we suppose by contradiction that $y(t_{2}) \leq \omega$ and thus that $y(t) \leq \omega$ for all $t\in \mathopen{[}\sigma,t_{2}\mathclose{]}$. 
Then
\begin{equation*}
x(t_{2}) \leq \kappa_{\sigma} + \omega (t_{2}-\sigma) \leq \kappa_{2}
\end{equation*}
and a contradiction is achieved as above. This concludes the proof.
\end{proof}

\subsection{Application of the shooting method}\label{section-2.4}

The working  hypotheses assumed in this paper ensure 
the uniqueness and the global existence of the solution $(x(t;\alpha,x_{\alpha},y_{\alpha}), y(t;\alpha, x_{\alpha},y_{\alpha}))$ 
to system~$(\mathcal{S}_{\lambda,\mu})$ satisfying the initial conditions 
\begin{equation}\label{eq-ic}
x(\alpha)=x_{\alpha},\quad y(\alpha)=y_{\alpha}.
\end{equation}
Consequently, we introduce (for every fixed couple of parameters $\lambda$ and $\mu$) the \textit{Poincar\'{e} map} $\Phi_{\alpha}^{\beta}$ associated to $(\mathcal{S}_{\lambda,\mu})$ in the interval $\mathopen{[}\alpha,\beta\mathclose{]}\subseteq\mathopen{[}0,T\mathclose{]}$.
In particular, it is a global diffeomorphism of the plane onto itself 
defined by 
\begin{equation*}
\Phi_{\alpha}^{\beta}\colon \mathbb{R}^{2} \to \mathbb{R}^{2},\quad \Phi_{\alpha}^{\beta}(x_{\alpha},y_{\alpha}) := (x(\beta), y(\beta)),
\end{equation*}
where $(x(t), y(t))=(x(t;\alpha,x_{\alpha},y_{\alpha}), y(t;\alpha,x_{\alpha},y_{\alpha}))$ is the solution to~$(\mathcal{S}_{\lambda,\mu})$ satisfying the initial conditions~\eqref{eq-ic}.

At this point our goal is to combine the results obtained in the previous subsections in order to describe the deformation in the phase-plane $(x,y)$ of the interval $X_{\mathopen{[}0,1\mathclose{]}}:=\mathopen{[}0,1\mathclose{]}\times\{0\}$ through the Poincar\'{e} map. 
In particular, it is straightforward to verify that any point 
$P\in\Phi_{0}^{\tau}(X_{\mathopen{[}0,1\mathclose{]}})\cap\Phi_{T}^{\tau}(X_{\mathopen{[}0,1\mathclose{]}})$ 
determines univocally a solution $(x(t;\tau,P),y(t;\tau,P))$ of system $(\mathcal{S}_{\lambda,\mu})$ satisfying the Neumann boundary conditions $y(0;\tau,P)=y(T;\tau,P)=0$. Hence, $u(t):=x(t;\tau,P)$ is a solution of problem $(\mathcal{N}_{\lambda,\mu})$.

In Figure~\ref{fig-2} we illustrate this approach by means of numerical simulations in the case of the leading Example considered in the present paper. 

\begin{figure}[htb]
\centering
\begin{subfigure}[t]{.5\textwidth}
  \centering
\begin{tikzpicture}[scale=1]
\begin{axis}[  
  scale only axis,
  x post scale=1,
  y post scale=0.25,
  tick label style={font=\scriptsize},
  enlargelimits=false, axis on top, axis equal image,
  xtick={-1,0,...,3},
  ytick={-2,0,...,8},
  xlabel={\small $x$},
  ylabel={\small $y$},
every axis x label/.style={
below,
at={(2.5cm,0cm)},
  yshift=-8pt
  },
every axis y label/.style={
below,
at={(0cm,1.25cm)},
  xshift=-8pt},
  y label style={rotate=90,anchor=south},
  xmin=-1.3,
  xmax=3.7,
  ymin=-2.6,
  ymax=9,  
  height=11cm,
] 
\addplot graphics[xmin=-1.401,xmax=3.8,ymin=-2.63,ymax=9] {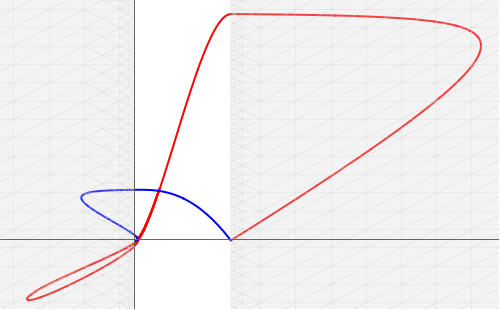};
\end{axis}
\end{tikzpicture} 
\caption{Shooting of $X_{\mathopen{[}0,1\mathclose{]}}$ forward over the interval $\mathopen{[}0,\tau\mathclose{]}$ (red) and shooting of $X_{\mathopen{[}0,1\mathclose{]}}$ backward over the interval $\mathopen{[}\tau,T\mathclose{]}$ (blue).
}
\end{subfigure}
\hspace*{\fill}
\begin{subfigure}[t]{.45\textwidth}
  \centering
\begin{tikzpicture}[scale=1]
\begin{axis}[
  x post scale=1.8,
  y post scale=0.25,
  tick label style={font=\scriptsize},
  enlargelimits=false, axis on top, axis equal image,
  xtick={0.24,0.25},
  ytick={1.95,2.00},
  xlabel={\small $x$},
  ylabel={\small $y$},
every axis x label/.style={
below,
at={(2cm,0cm)},
  yshift=-8pt
  },  
every axis y label/.style={
below,
at={(0cm,1.25cm)},
  xshift=-8pt},
  y label style={rotate=90,anchor=south},
  xmin=0.24,
  xmax=0.25,
  ymin=1.95,
  ymax=2,  
  height=11cm,
  scale only axis
] 
\addplot graphics[xmin=0.24,xmax=0.25,ymin=1.95,ymax=2.00] {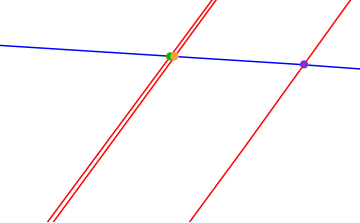};
\end{axis}
\end{tikzpicture} 
\caption{Zooming on three intersection points in $\Phi_{0}^{\tau}(X_{\mathopen{[}0,1\mathclose{]}})\cap\Phi_{T}^{\tau}(X_{\mathopen{[}0,1\mathclose{]}})$ which identify three solutions of $(\mathcal{N}_{\lambda,\mu})$.}
\end{subfigure} 
\caption{In the phase plane $(x,y)$: dynamics of the Poincar\'{e} maps $\Phi_{0}^{\tau}$ and $\Phi_{T}^{\tau}$ associated to system $(\mathcal{S}_{\lambda,\mu})$ as in the framework of Example with $\sigma=0.5$, $\tau=1$, $T=2$, $a_{1}=1.75$, $a_{2}=1$, $a_{3}=1$, for $\lambda=25$ and $\mu=500$.}        
\label{fig-2}
\end{figure}

We are now ready to prove Theorem~\ref{th-1.1}. Accordingly, we divide the argument in four steps. In the first three, we describe the main properties of the image of the segment $X_{\mathopen{[}0,1\mathclose{]}}$ through the Poincar\'{e} maps associated to the subintervals $\mathopen{[}0,\sigma\mathclose{]}$, $\mathopen{[}\sigma,\tau\mathclose{]}$ and $\mathopen{[}\tau,T\mathclose{]}$, respectively. Finally, in the last step we reach the thesis.

\medskip
\noindent
\textit{Step~1. Dynamics on $\mathopen{[}0,\sigma\mathclose{]}$. }
Let us fix $0<\kappa_{1}<\kappa_{0}<1$ and $0<t_{1} \leq \sigma (1-\kappa_{1}/\kappa_{0})$. In this manner, we have that $\kappa_{1}/(\sigma-t_{1}) \leq (\kappa_{0}-\kappa_{1})/t_{1}$ and so we can apply Lemma~\ref{lem-2.1} together with Lemma~\ref{lem-2.2}. Then, for $\lambda > \lambda^{\star}(\kappa_{0},\kappa_{1},t_{1})$
 (cf.~\eqref{eq-lambda1})
 and an arbitrary $\mu>0$,  we obtain that 
\begin{equation*}
x(\sigma;0,\kappa_{0},0)\leq0, \quad y(\sigma;0,\kappa_{0},0)<0.
\end{equation*}
We stress that this conclusion does not depend on $\mu$.
Next, we notice that $\Phi_{0}^{\sigma}(1,0)=(1,0)$ and, by the concavity of $x$ in $\mathopen{[}0,\sigma\mathclose{]}$, that $\Phi^{\sigma}_{0}(\mathopen{[}0,1\mathclose{]}\times\{0\})\subseteq\mathopen{]}-\infty,1\mathclose{]}\times\mathopen{]}-\infty,0\mathclose{]}$.
Thus, from the continuous dependence of the solutions upon the initial data and the Intermediate Value Theorem, the following fact holds. 
There exists an interval $\mathopen{[}l_{1},1\mathclose{]}\subseteq\mathopen{[}\kappa_{0},1\mathclose{]}$ such that $\Phi^{\sigma}_{0}(\mathopen{[}l_{1},1\mathclose{]}\times\{0\})\subseteq\mathopen{[}0,1\mathclose{]}\times\mathopen{]}-\infty,0\mathclose{]}$, $\Phi_{0}^{\sigma}(l_{1},0) \in \{0\}\times\mathopen{]}-\infty,0\mathclose{[}$ 
and 
$x(t;0,\xi,0)\in \mathopen{]}0,1\mathclose{[}$ for all $t\in \mathopen{[}0,\sigma\mathclose{]}$, $\xi \in \mathopen{]}l_{1},1\mathclose{[}$. 

Furthermore, by Lemma~\ref{lem-ang} there exits $\kappa_{4}\in\mathopen{]}0,\kappa_{1}\mathclose{[}$ such that $\Phi^{\sigma}_{0}(\mathopen{]}0,\kappa_{4}\mathclose{]}\times\{0\})\subseteq\mathopen{]}0,1\mathclose{[}\times\mathopen{]}-\infty,0\mathclose{]}$. Then, recalling that $\Phi_{0}^{\sigma}(\kappa_{0},0) \in \mathopen{]}-\infty,0\mathclose{]}\times\mathopen{]}-\infty,0\mathclose{[}$, from the same previous arguments of continuity, there exists an interval $\mathopen{[}0,r_{1}\mathclose{]}\subseteq\mathopen{[}0,\kappa_{0}\mathclose{]}$ (with $r_{1}>\kappa_{4}$) such that $\Phi^{\sigma}_{0}(\mathopen{[}0,r_{1}\mathclose{]}\times\{0\})\subseteq\mathopen{[}0,1\mathclose{[}\times\mathopen{]}-\infty,0\mathclose{]}$, $\Phi_{0}^{\sigma}(r_{1},0) \in \{0\}\times\mathopen{]}-\infty,0\mathclose{[}$
and 
$x(t;0,\xi,0)\in \mathopen{]}0,1\mathclose{[}$ for all $t\in \mathopen{[}0,\sigma\mathclose{]}$, $\xi \in \mathopen{]}0,r_{1}\mathclose{[}$.

\medskip
\noindent
\textit{Step~2. Dynamics on $\mathopen{[}\tau,T\mathclose{]}$. }
Analogously to \textit{Step~1}, let us fix $0<\kappa_{3}<\kappa_{T}<1$ and $0<t_{3} \leq \tau + (T-\tau)\kappa_{3}/\kappa_{T}$. 
Given $\lambda > \lambda^{\star\star}(\kappa_{3},\kappa_{T},t_{3})$ (cf.~\eqref{eq-lambda2})
and an arbitrary $\mu>0$, from Lemma~\ref{lem-2.4} and Lemma~\ref{lem-2.5} we have that
\begin{equation*}
x(\tau;T,\kappa_{T},0)\leq0, \quad y(\tau;T,\kappa_{T},0)>0.
\end{equation*}
Furthermore, we highlight that $\Phi_{T}^{\tau}(1,0)=(1,0)$ and $\Phi^{\tau}_{T}(\mathopen{[}0,1\mathclose{]}\times\{0\})\subseteq\mathopen{]}-\infty,1\mathclose{]}\times\mathopen{[}0,+\infty\mathclose{[}$.
Consequently, by the continuous dependence of the solutions upon the initial data and the Intermediate Value Theorem, there exists an interval $\mathopen{[}l_{2},1\mathclose{]}\subseteq\mathopen{[}\kappa_{T},1\mathclose{]}$ such that $\Phi^{\tau}_{T}(\mathopen{[}l_{2},1\mathclose{]}\times\{0\})\subseteq\mathopen{[}0,1\mathclose{]}\times\mathopen{[}0,+\infty\mathclose{[}$, $\Phi_{T}^{\tau}(l_{2},0) \in \{0\}\times\mathopen{]}0,+\infty\mathclose{[}$ 
and 
$x(t;T,\xi,0)\in \mathopen{]}0,1\mathclose{[}$ for all $t\in \mathopen{[}\tau,T\mathclose{]}$, $\xi \in \mathopen{]}l_{2},1\mathclose{[}$. 

\medskip
\noindent
\textit{Step~3. Dynamics on $\mathopen{[}\sigma,\tau\mathclose{]}$. }
Let us define
\begin{equation*}
\lambda^{*}:=\max\bigl{\{}\lambda^{\star}(\kappa_{0},\kappa_{1},t_{1}),\lambda^{\star\star}(\kappa_{3},\kappa_{T},t_{3})\bigr{\}}
\end{equation*}
and fix  $\lambda > \lambda^{*}$.

First of all, we observe that, for any $x_{0}\in\mathbb{R}$, the solution $(x(t),y(t))$ to system~$(\mathcal{S}_{\lambda,\mu})$ with initial values $x(0)=x_{0}$ and $y(0)=0$ satisfies
\begin{equation*}
y(\sigma) = y(0) + \lambda \int_{0}^{\sigma} a^{+}(\xi)g(x(\xi)) \,d\xi \geq - \omega_{\sigma},
\end{equation*}
where $\omega_{\sigma} := \lambda^{*} A^{+}(0,\sigma) \max_{s\in\mathopen{[}0,1\mathclose{]}}g(s)$.

Let us take $p_{1}\in\mathopen{]}0,r_{1}\mathclose{[}$ and $p_{2}\in\mathopen{]}l_{1},1\mathclose{[}$. We define 
\begin{equation*}
\kappa_{\sigma,i}:=x(\sigma;0,p_{i},0),\quad \text{for } \, i=1,2.
\end{equation*}
From the properties of the continua $\Phi_{0}^{\sigma}(\mathopen{[}0,r_{1}\mathclose{]}\times\{0\})$ and $\Phi_{0}^{\sigma}(\mathopen{[}l_{1},1\mathclose{]}\times\{0\})$ achieved in \textit{Step~1}, it follows that $\kappa_{\sigma,i}\in\mathopen{]}0,1\mathclose{[}$ for $i=1,2$.
Next, for $i=1,2$, we fix $\kappa_{2,i}\in\mathopen{]}\kappa_{\sigma,i},1\mathclose{[}$ and choose $t_{2,i}$ such that
\begin{equation*}
\sigma< t_{2,i} \leq \min\biggl{\{}\sigma + \dfrac{\kappa_{\sigma,i}}{2\omega_{\sigma}},
\dfrac{\sigma(1-\kappa_{2,i})+\tau(\kappa_{2,i}-\kappa_{\sigma,i})}{1-\kappa_{\sigma,i}} 
\biggr{\}}.
\end{equation*}
In this manner, we enter in the setting of Lemma~\ref{lem-2.6} and Lemma~\ref{lem-2.7}.
For $i=1,2$, taking $\mu>\mu^{\star}(\kappa_{2,i},\kappa_{\sigma,i},t_{2,i},\omega_{\sigma})$ (cf.~\eqref{eq-mu}), we obtain that 
\begin{equation}\label{eq-p}
x(\tau;0,p_{i},0)\geq1,\quad y(\tau;0,p_{i},0)>\omega_{i}>0,\quad\text{for } \, i=1,2.
\end{equation}

We remark now that, for any choice of $t_{0}\in\mathopen{[}0,T\mathclose{]}$ and $y_{0}<0$, if $(x(t),y(t))$ is the solution of the Cauchy problem associated with system~$(\mathcal{S}_{\lambda,\mu})$ satisfying the initial conditions $x(t_{0})=0$ and $y(t_{0})=y_{0}$, then 
\begin{equation*}
x(t;t_{0},0,y_{0})<0,\quad y(t;t_{0},0,y_{0})<0, \quad \text{for all } \, t\in \mathopen{]}t_{0},T\mathclose{]}.
\end{equation*}
Indeed, let $\mathopen{]}t_{0},t^{*}\mathclose{[}\in\mathopen{]}t_{0},T\mathclose{]}$ be the maximal open interval such that 
$y(t)<0$ for all $t\in \mathopen{]}t_{0},t^{*}\mathclose{[}$. By an integration of $x'=y$, we have $x(t)<0$ for all $t\in\mathopen{]}t_{0},t^{*}\mathclose{[}$. Assume now, by contradiction, that $t^{*}<T$. Then, $0=y(t^{*})=y_{0}<0$ and we have a contradiction. The claim follows. 

Consequently, we deduce that 
\begin{equation}\label{eq-r1}
x(\tau;0,r_{1},0)<0,\quad y(\tau;0,r_{1},0)<0,
\end{equation}
and 
\begin{equation}\label{eq-r2}
x(\tau;0,l_{1},0)<0,\quad y(\tau;0,l_{1},0)<0.
\end{equation}

At this point, taking into account \eqref{eq-p}, \eqref{eq-r1}, \eqref{eq-r2} and $\Phi_{0}^{\tau}(0,0)=(0,0)$, thanks to
the continuous dependence of the solutions upon the initial data and the Intermediate Value Theorem, we deduce what follows.
There exist three intervals 
\begin{equation*}
\mathopen{[}q_{1,1},q_{2,1}\mathclose{]}\subseteq\mathopen{[}0,p_{1}\mathclose{]}, \quad
\mathopen{[}q_{1,2},q_{2,2}\mathclose{]}\subseteq\mathopen{[}p_{1},r_{1}\mathclose{]}, \quad
\mathopen{[}q_{1,3},q_{2,3}\mathclose{]}\subseteq\mathopen{[}l_{1},p_{2}\mathclose{]},
\end{equation*}
such that, for each $j\in\{1,2,3\}$,
$\Phi_{0}^{\tau}(\mathopen{[}q_{1,j},q_{2,j}\mathclose{]}\times\{0\})\subseteq\mathopen{[}0,1\mathclose{]}\times\mathbb{R}$ with 
\begin{equation*}
\Phi_{0}^{\tau}(q_{1,j},0) \in \{0\}\times\mathopen{]}-\infty,0\mathclose{]},
\quad
\Phi_{0}^{\tau}(q_{2,j},0) \in \{1\}\times\mathopen{]}0,+\infty\mathclose{[},
\end{equation*}
and
\begin{equation*}
x(t;0,\xi,0)\in \mathopen{]}0,1\mathclose{[}, \quad \text{for all } \, t\in \mathopen{[}0,\tau\mathclose{]}, \; \xi \in \mathopen{]}q_{1,j},q_{2,j}\mathclose{[}. 
\end{equation*}

We conclude that there exist three sub-continua of $\Phi_{0}^{\tau}(X_{\mathopen{[}0,1\mathclose{]}})$ connecting $\{0\}\times\mathopen{]}-\infty,0\mathclose{]}$ with $\{1\}\times\mathopen{]}0,+\infty\mathclose{[}$.
We stress that the three sub-continua do not intersect each other, due to the uniqueness of the solutions to the initial value problems associated with $(\mathcal{S}_{\lambda,\mu})$.

\medskip
\noindent
\textit{Step~4. Conclusion. }
Let us take 
\begin{equation*}
\mu > \mu^{*}(\lambda):=\max_{i\in\{1,2\}}\mu^{\star}(\kappa_{2,i},\kappa_{\sigma,i},t_{2,i},\omega_{\sigma}).
\end{equation*}
Then, we are in the following situation.
\begin{itemize}
\item From \textit{Step~2}, we deduce the existence of a sub-continuum in $\Phi_{T}^{\tau}(X_{\mathopen{[}0,1\mathclose{]}})$ connecting $\{0\}\times\mathopen{]}0,+\infty\mathclose{[}$ with $(1,0)$.
\item From \textit{Step~1} and \textit{Step~3}, we deduce the existence of three pairwise disjoint sub-continua in $\Phi_{0}^{\tau}(X_{\mathopen{[}0,1\mathclose{]}})$ connecting $\{0\}\times\mathopen{]}-\infty,0\mathclose{]}$ with $\{1\}\times\mathopen{]}0,+\infty\mathclose{[}$.
\end{itemize}
This way, from a standard connectivity argument, it follows the existence of three distinct intersection points:
\begin{equation*}
P_{j}\in\Phi_{0}^{\tau}(\mathopen{]}q_{1,j},q_{2,j}\mathclose{[}\times\{0\})\cap\Phi_{T}^{\tau}(\mathopen{]}l_{2},1\mathclose{[}\times\{0\}), \quad j=1,2,3.
\end{equation*}
See Figure~\ref{fig-2} for a graphical representation.
For each $j\in\{1,2,3\}$, given the solution $(x(t),y(t))$ of the Cauchy problem associated with system~$(\mathcal{S}_{\lambda,\mu})$ with initial data at time $t=\tau$ the point $P_{j}$, then we have a positive solution to problem~$(\mathcal{N}_{\lambda,\mu})$ defined by $u(t):=x(t;\tau,P_{j})$. 
Moreover, from a straightforward argument by contradiction, it follows that
\begin{equation*}
\begin{aligned}
&\Phi_{0}^{t}(\xi,0) \in \mathopen{]}0,1\mathclose{[}\times\mathbb{R}, &&\text{for all } \, t\in\mathopen{]}q_{1,j},q_{2,j}\mathclose{[}, \; \xi\in\mathopen{[}0,\tau\mathclose{]},
\\&\Phi_{T}^{t}(\xi,0) \in \mathopen{]}0,1\mathclose{[}\times\mathbb{R}, &&\text{for all } \, t\in\mathopen{]}l_{1},1\mathclose{[}, \; \xi\in\mathopen{[}\tau,T\mathclose{]},
\end{aligned}
\end{equation*}
and so we have that $0<u(t)<1$ for all $t\in\mathopen{[}0,T\mathclose{]}$. Then, Theorem~\ref{th-1.1} is proved.\qed

\begin{remark}\label{rem-2.2}
It is worth noting that the hypotheses on $g(s)$ are not all used in the study of the dynamics over the three intervals $\mathopen{[}0,\sigma\mathclose{]}$, $\mathopen{[}\sigma,\tau\mathclose{]}$ and $\mathopen{[}\tau,T\mathclose{]}$. In particular, condition $(g_{0})$ is not used in Section~\ref{section-2.2} and Section~\ref{section-2.3}. Accordingly, Theorem~\ref{th-1.1} can be stated in the following more general form. Assume that $g_{1},g_{2},g_{3} \colon \mathopen{[}0,1\mathclose{]} \to \mathbb{R}^{+}$ are locally Lipschitz continuous functions satisfying $(g_{*})$. Suppose that $g_{1}(s)$ satisfies condition $(g_{0})$. Given three non-null weights $a_{1}\in L^{1}(\mathopen{[}0,\sigma\mathclose{]},\mathopen{[}0,+\infty\mathclose{[})$, $a_{2}\in L^{1}(\mathopen{[}\sigma,\tau\mathclose{]},\mathopen{[}0,+\infty\mathclose{[})$ and $a_{3}\in L^{1}(\mathopen{[}\tau,T\mathclose{]},\mathopen{[}0,+\infty\mathclose{[})$ with $0<\sigma<\tau<T$, then there exists $\lambda^{*}>0$ such that for each $\lambda>\lambda^{*}$ there exists $\mu^{*}(\lambda)>0$ such that for every $\mu>\mu^{*}(\lambda)$ the Neumann problem associated with the following differential equation
\begin{equation}\label{eq-rem}
u'' + \lambda\bigl{(} a_{1}(x)g_{1}(x) + a_{3}(x)g_{3}(x)\bigr{)} - \mu a_{2}(x)g_{2}(x) = 0
\end{equation}
has at least three positive solutions.

In particular, we observe that our result is valid also for functions $g_{2}, g_{3}$ such that $g_{2}(s)/s \not\to 0$ and $g_{3}(s)/s \not\to 0$ as $s\to0^{+}$, as in the case of the map $s\mapsto s(1-s)$.

An equation of the form~\eqref{eq-rem} involves a \textit{conflicting nonlinearity}. The adjective ``conflicting'' refers to the fact that the term in the nonlinearity depending on $\lambda$ has an opposite effect on the existence of solutions with respect to the one depending on $\mu$. Such kind of problems are apparently new in this framework but  have already  been addressed in the context of superlinear nonlinearities (see \cite{Fe-17cpaa,Ru-98} for an introduction and references on this topic). 
$\hfill\lhd$
\end{remark}

\section{Related results}\label{section-3}

Dealing with indefinite Neumann problems of the form considered in $(\mathcal{N}_{\lambda,\mu})$, classical applications are both in the context of periodic boundary value problems and in the one of radially symmetric Neumann boundary value problems defined on an annular domain of $\mathbb{R}^{N}$ for $N\geq2$.

\medskip

Regarding the first context, given a solution of the Neumann problem $(\mathcal{N}_{\lambda,\mu})$ one can easily prove the following result by means of an even reflection and a periodic extension. 

\begin{corollary}\label{cor-1}
Let $g \colon \mathopen{[}0,1\mathclose{]} \to \mathbb{R}^{+}$ be a locally Lipschitz continuous function satisfying $(g_{*})$ and $(g_{0})$.
Let $a \in L^{1}_{\text{\rm loc}}(\mathbb{R})$ be a $2T$-periodic even function satisfying $(a_{*})$.
Then, there exists $\lambda^{*}>0$ such that for each $\lambda>\lambda^{*}$ there exists $\mu^{*}(\lambda)>0$ such that for every $\mu>\mu^{*}(\lambda)$ the equation
$u'' + (\lambda a^{+}(t)-\mu a^{-}(t)) g(u) = 0$
has at least three $2T$-periodic positive solutions.
\end{corollary}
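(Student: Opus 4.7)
The plan is to apply Theorem~\ref{th-1.1} on $\mathopen{[}0,T\mathclose{]}$ and then upgrade each of the resulting three positive Neumann solutions to a $2T$-periodic one via even reflection about $t=0$ followed by $2T$-periodic extension to $\mathbb{R}$.

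First, I would observe that, since $a$ is even, both $a^{+}$ and $a^{-}$ are even, and the restriction $a|_{\mathopen{[}0,T\mathclose{]}}$ lies in $L^{1}(0,T)$ and inherits $(a_{*})$ directly. Theorem~\ref{th-1.1} therefore furnishes $\lambda^{*}>0$ and, for every $\lambda>\lambda^{*}$, a threshold $\mu^{*}(\lambda)>0$ such that whenever $\mu>\mu^{*}(\lambda)$ problem $(\mathcal{N}_{\lambda,\mu})$ admits three positive solutions $u_{1},u_{2},u_{3}\colon\mathopen{[}0,T\mathclose{]}\to\mathopen{]}0,1\mathclose{[}$ with $u_{i}'(0)=u_{i}'(T)=0$ for $i\in\{1,2,3\}$.

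Second, for each such $i$ I would define the even reflection $\tilde{u}_{i}(t):=u_{i}(|t|)$ for $t\in\mathopen{[}-T,T\mathclose{]}$. The Neumann condition $u_{i}'(0)=0$ ensures $\tilde{u}_{i}\in C^{1}(\mathopen{[}-T,T\mathclose{]})$ with absolutely continuous derivative. A direct computation on $\mathopen{]}-T,0\mathclose{[}$ gives $\tilde{u}_{i}''(t)=u_{i}''(-t)$ for a.e.\ $t$, and using $a^{\pm}(-t)=a^{\pm}(t)$ one obtains
\begin{equation*}
\tilde{u}_{i}''(t) = -\bigl(\lambda a^{+}(-t)-\mu a^{-}(-t)\bigr) g(u_{i}(-t)) = -\bigl(\lambda a^{+}(t)-\mu a^{-}(t)\bigr) g(\tilde{u}_{i}(t)),
\end{equation*}
so $\tilde{u}_{i}$ is a Carath\'{e}odory solution of the equation on $\mathopen{[}-T,T\mathclose{]}$.

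Third, I would extend $\tilde{u}_{i}$ to $\mathbb{R}$ by $2T$-periodicity. The compatibility at the endpoints,
\begin{equation*}
\tilde{u}_{i}(T)=u_{i}(T)=\tilde{u}_{i}(-T), \qquad \tilde{u}_{i}'(T)=u_{i}'(T)=0=-u_{i}'(T)=\tilde{u}_{i}'(-T),
\end{equation*}
guarantees that the periodic extension is $C^{1}$ on $\mathbb{R}$ with $\tilde{u}_{i}'$ absolutely continuous. Combined with the $2T$-periodicity of the weight $\lambda a^{+}-\mu a^{-}$, the differential equation is then satisfied almost everywhere on $\mathbb{R}$. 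The strict bounds $0<\tilde{u}_{i}(t)<1$ and the pairwise distinctness of the three periodic solutions transfer directly from the Neumann solutions $u_{1},u_{2},u_{3}$.

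There is no substantive obstacle: the entire argument rests on two compatibility facts — the evenness of $a^{\pm}$ and the vanishing of $u_{i}'$ at $0$ and $T$ — which together prevent reflection and periodization from introducing any jump in $u'$, and hence any defect in the equation.
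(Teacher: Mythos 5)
Your proposal is correct and follows exactly the route the paper indicates: apply Theorem~\ref{th-1.1} on $\mathopen{[}0,T\mathclose{]}$, then use the evenness of $a$ and the Neumann conditions to perform an even reflection and a $2T$-periodic extension. The paper leaves these verifications to the reader, and your checks of the matching of $\tilde{u}_{i}$ and $\tilde{u}_{i}'$ at $t=0$ and $t=\pm T$ are precisely the details needed.
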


\medskip

On the other context, in $\mathbb{R}^{N}$ (for $N\geq2$) let us consider the open annular domain
\begin{equation*}
\Omega := \bigl{\{} x\in\mathbb{R}^{N} \colon R_{i} < |x| < R_{e} \bigr{\}},\quad \text{with } \, 0<R_{i}<R_{e},
\end{equation*}
where $|\cdot|$ denotes the usual Euclidean norm in $\mathbb{R}^{N}$.
We deal with the indefinite Neumann problem 
\begin{equation*}
\begin{cases}
\, - \Delta u = w_{\lambda,\mu}(x)g(u)  & \text{in } \Omega, \\\vspace*{2pt}
\, \dfrac{\partial u}{\partial \nu} = 0 & \text{on } \partial\Omega,
\end{cases}
\leqno{(\mathcal{P}_{\lambda,\mu})}
\end{equation*}
where 
\begin{equation*}
w_{\lambda,\mu}(x):=\lambda w^{+}(x)-\mu w^{-}(x).
\end{equation*}
Assuming that the weight term has radially symmetry, namely $w(x)=\mathcal{W}(|x|)$ for all $x\in\Omega$ with $\mathcal{W}\colon \mathopen{[}R_{i},R_{e}\mathclose{]}\to\mathbb{R}$,
we look for radially symmetric positive solutions to problem $(\mathcal{P}_{\lambda,\mu})$, i.e.~solutions of the form $u(x)=\mathcal{U}(|x|)$ where $\mathcal{U}\colon \mathopen{[}R_{i},R_{e}\mathclose{]}\to\mathbb{R}$.
Accordingly, our study can be reduced to the search of positive solutions of the Neumann boundary value problem
\begin{equation}\label{eq-rad}
{\mathcal{U}}''(r) + \dfrac{N-1}{r} \, {\mathcal{U}}'(r) + {\mathcal{W}}_{\lambda,\mu}(r) g({\mathcal{U}}(r)) = 0,
\quad {\mathcal{U}}'(R_{i}) = {\mathcal{U}}'(R_{e}) = 0,
\end{equation}
which, via a standard change of variable, can be transformed into the equivalent problem
\begin{equation}\label{eq-rad1}
v'' + a_{\lambda,\mu}(t) g(v) = 0, \quad v'(0) = v'(T) = 0,
\end{equation}
with
\begin{equation*}
t = h(r):= \int_{R_{i}}^{r} \xi^{1-N} \,d\xi, \quad r(t):= h^{-1}(t),
\end{equation*}
and
\begin{equation*}
v(t):={\mathcal{U}}(r(t)), \quad a(t):= r(t)^{2(N-1)}{\mathcal{W}}(r(t)), \quad T:= \int_{R_{i}}^{R_{e}} \xi^{1-N}\,d\xi.
\end{equation*}

In this setting, a direct consequence of Theorem~\ref{th-1.1} is the following.

\begin{corollary}\label{cor-2}
Let $g \colon \mathopen{[}0,1\mathclose{]} \to \mathbb{R}^{+}$ be a locally Lipschitz continuous function satisfying $(g_{*})$ and $(g_{0})$.
Let $\mathcal{W} \colon \mathopen{[}R_{i},R_{e}\mathclose{]} \to \mathbb{R}$ be an $L^{1}$-function such that there exist 
$\sigma,\tau$ with $0 < \sigma < \tau < T$ for which the following holds
\begin{equation*}
\begin{aligned}
&\mathcal{W}^{+}(t)\succ 0,  & &\mathcal{W}^{-}(t)\equiv 0, & &\text{on } \mathopen{[}R_{i},\sigma \mathclose{]},\\
&\mathcal{W}^{+}(t)\equiv 0, & &\mathcal{W}^{-}(t)\succ 0,  & &\text{on } \mathopen{[}\sigma,\tau \mathclose{]}, \\
&\mathcal{W}^{+}(t)\succ 0,  & &\mathcal{W}^{-}(t)\equiv 0, & &\text{on } \mathopen{[}\tau,R_{e} \mathclose{]},
\end{aligned}
\end{equation*}
and let $w(x) := \mathcal{W}(|x|)$, for $x\in\Omega$.
Then, there exists $\lambda^{*}>0$ such that for each $\lambda>\lambda^{*}$ there exists $\mu^{*}(\lambda)>0$ such that for every $\mu>\mu^{*}(\lambda)$ problem $(\mathcal{P}_{\lambda,\mu})$ has at least three radially symmetric positive solutions.
\end{corollary}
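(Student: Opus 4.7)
The plan is to reduce Corollary~\ref{cor-2} to Theorem~\ref{th-1.1} via the classical radial transformation sketched in the paragraph preceding the statement. First I would look for solutions of $(\mathcal{P}_{\lambda,\mu})$ of the form $u(x) = \mathcal{U}(|x|)$; the Neumann condition on $\partial\Omega$ then becomes $\mathcal{U}'(R_{i}) = \mathcal{U}'(R_{e}) = 0$, and the PDE reduces to the radial ODE in~\eqref{eq-rad}. Next, using the Liouville-type change of variable $t = h(r) := \int_{R_{i}}^{r} \xi^{1-N}\,d\xi$, which is a $C^{1}$-diffeomorphism of $\mathopen{[}R_{i},R_{e}\mathclose{]}$ onto $\mathopen{[}0,T\mathclose{]}$ with strictly positive derivative $h'(r) = r^{1-N}$, equation~\eqref{eq-rad} transforms into the one-dimensional Neumann problem~\eqref{eq-rad1} with weight $a(t) := r(t)^{2(N-1)}\mathcal{W}(r(t))$, where $r(t) = h^{-1}(t)$.

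The key point is then to check that the transformed weight $a(t)$ satisfies hypothesis $(a_{*})$ of Theorem~\ref{th-1.1}. Because $r(t)^{2(N-1)}$ is strictly positive and bounded on $\mathopen{[}0,T\mathclose{]}$, the positive and negative parts satisfy $a^{\pm}(t) = r(t)^{2(N-1)}\mathcal{W}^{\pm}(r(t))$, so the sign structure of $a$ is exactly the pullback of that of $\mathcal{W}$. Setting $\sigma' := h(\sigma)$ and $\tau' := h(\tau)$, one has $0 < \sigma' < \tau' < T$, and the three-humped pattern of $\mathcal{W}$ on $\mathopen{[}R_{i},\sigma\mathclose{]}$, $\mathopen{[}\sigma,\tau\mathclose{]}$, $\mathopen{[}\tau,R_{e}\mathclose{]}$ translates into the analogous pattern of $a^{\pm}$ on $\mathopen{[}0,\sigma'\mathclose{]}$, $\mathopen{[}\sigma',\tau'\mathclose{]}$, $\mathopen{[}\tau',T\mathclose{]}$; moreover, boundedness of $r(t)^{2(N-1)}$ combined with $dr/dt = r^{N-1}$ bounded ensures $a \in L^{1}(0,T)$ whenever $\mathcal{W} \in L^{1}(R_{i},R_{e})$. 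The parametric rescaling also commutes with the change of variable since $\lambda\mathcal{W}^{+} - \mu\mathcal{W}^{-}$ is multiplied by the positive factor $r(t)^{2(N-1)}$, yielding precisely $\lambda a^{+}(t) - \mu a^{-}(t)$.

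Having verified the hypotheses, I would apply Theorem~\ref{th-1.1} to~\eqref{eq-rad1} to obtain thresholds $\lambda^{*}>0$ and $\mu^{*}(\lambda)>0$ such that for $\lambda>\lambda^{*}$ and $\mu>\mu^{*}(\lambda)$ the transformed Neumann problem admits at least three positive solutions $v_{1}, v_{2}, v_{3}$. Undoing the substitution, $\mathcal{U}_{j}(r) := v_{j}(h(r))$ provides three positive solutions to~\eqref{eq-rad}, and finally $u_{j}(x) := \mathcal{U}_{j}(|x|)$ yields three distinct radially symmetric positive solutions of $(\mathcal{P}_{\lambda,\mu})$, which is the desired conclusion. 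There is no real obstacle here: the entire argument is a routine verification that the radial reduction preserves all the structural assumptions of the one-dimensional theorem; the only point deserving minor care is the bookkeeping that the transformation respects both the Neumann boundary condition and the positive/negative decomposition of the weight, but both follow immediately from the positivity and monotonicity of $h$.
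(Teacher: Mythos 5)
Your proposal is correct and follows exactly the route the paper takes: the paper sets up the radial reduction \eqref{eq-rad} and the change of variable $t=h(r)$ leading to \eqref{eq-rad1} in the text preceding the statement, and then declares the corollary ``a direct consequence of Theorem~\ref{th-1.1}''. Your additional verifications (that the sign pattern of $\mathcal{W}$ pulls back to $(a_{*})$ for $a(t)=r(t)^{2(N-1)}\mathcal{W}(r(t))$, that $a\in L^{1}(0,T)$, and that the decomposition $\lambda a^{+}-\mu a^{-}$ is respected) are exactly the routine checks the paper leaves implicit.
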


\section*{Acknowledgements}

We thank Prof.~Fabio Zanolin for the interesting and helpful discussions on the subject of the present paper.

\bibliographystyle{elsart-num-sort}
\bibliography{FeSo_biblio}

\bigskip
\begin{flushleft}

{\small{\it Preprint}}

{\small{\it June 2017}}

\end{flushleft}

\end{document}